    \newtheorem{Theorem}{Theorem}[section]
    \newtheorem{Proposition}[Theorem]{Proposition}
    \newtheorem{Lemma}[Theorem]{Lemma}
    \newtheorem{Corollary}[Theorem]{Corollary}
    \theoremstyle{definition}
    \newtheorem{Definition}[Theorem]{Definition}
    \newtheorem{Note}[Theorem]{Note}
    \newcommand{\set}[1]{\{ #1 \} }
    \newcommand{\bd}{\partial}
    \newcommand{\rmv}{\smallsetminus}
    \DeclareMathOperator{\Id}{Id}
    \DeclareMathOperator{\Int}{int}
    \DeclareMathOperator{\dist}{d}
\begin{document}

\title[Monodromy of tunnels in fiber surfaces]{Monodromy action on unknotting tunnels in fiber surfaces}
\author{Jessica E. Banks}
\author{Matt Rathbun}
\date{June 3, 2014}
\begin{abstract} In \cite{RatTOFL}, the second author showed that the tunnel of a tunnel number one, fibered link can be isotoped to lie as a properly embedded arc in the fiber surface of the link. In this paper, we analyze how the arc behaves under the monodromy action, and show that the tunnel arc is nearly clean, with the possible exception of twisting around the boundary of the fiber.
\end{abstract}
\maketitle


\section{Introduction}

The Berge Conjecture is a long-standing conjecture that attempts to classify all knots in $S^3$ that admit Dehn surgeries resulting in a lens space. Such a classification is foundational to understanding Dehn surgery on 3-manifolds, and has been a motivating topic of research in low dimensional topology for decades. All knots conjectured by Berge to admit such surgeries are both \emph{tunnel number one}, and \emph{fibered}. Conversely, Yi Ni proved that if a knot admits such a surgery, then it must be fibered, \cite{NiKFHDFK}. In light of this, we aim to understand tunnel number one, fibered knots and links.

In Section \ref{section:definitions}, we will define three well-understood operations on fibered links: Stallings twists, Hopf plumbing, and its inverse Hopf de-plumbing. All three of these operations can be characterized by arcs that are \emph{clean}, i.e.\ disjoint from their images under the monodromy map (except at their endpoints). 

Our goal in this paper is to understand how the monodromy acts on tunnels sitting as arcs in the fiber. We show that such tunnels sit \emph{nearly} cleanly in the fiber. We prove the following theorem:

\begin{restatable}{Theorem}{TheoremTunnelAlmostClean}\label{Theorem:TunnelAlmostClean}
Suppose $K$ is a tunnel number one, fibered link, with fiber $F$, monodromy $h$, and tunnel $\tau \subset F$. Then there exists an arc $\beta$, freely isotopic to $h(\tau)$ rel $\bd F$, so that $\Int(\tau) \cap \Int(\beta) = \emptyset$. In particular, for a suitable choice of monodromy map $h$ within its isotopy class, there exists a regular neighborhood of $\bd F$ outside of which $\tau$ and $h(\tau)$ do not intersect.
\end{restatable}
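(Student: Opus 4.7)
The plan is to show that $\tau$ and $h(\tau)$, as arcs in $F$, have free-isotopy intersection number zero, from which the existence of $\beta$ follows. I would put $\tau$ and $h(\tau)$ in general position in $F$ and freely isotope $h(\tau)$ rel $\bd F$ to minimize $|\tau \cap h(\tau)|$. Note that although $\tau$ and $h(\tau)$ are freely isotopic in $M$ via the fibration flow, that flow drags their endpoints around meridian loops on $\bd M$ rather than along $\bd F$, so a direct $\pi_1$-injectivity argument for $F \hookrightarrow M$ does not force isotopy in $F$; the tunnel structure of $\tau$ must be used.

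The key tool is the handlebody $H := M \rmv N(\tau)$, which is a genus-two handlebody since $\tau$ is an unknotting tunnel. Set $F_\tau := F \rmv N_F(\tau)$, and view each arc of $h(\tau) \cap F_\tau$ as a properly embedded arc in $F_\tau$ with endpoints on $\bd F_\tau = (\bd F \cap \bd F_\tau) \cup \tau^+ \cup \tau^-$, where $\tau^\pm$ are the two sides of $\tau$ created by cutting $F$ open. The aim is to show that any arc of $h(\tau) \cap F_\tau$ with both endpoints on the $\tau^\pm$ side --- corresponding to a pair of consecutive intersection points of $h(\tau)$ with $\tau$ --- forces a bigon in $F$ between $h(\tau)$ and $\tau$, contradicting minimality of the intersection.

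The hard part will be constructing such a bigon. A natural source is the rectangle $\tau \times [0,1] \subset F \times [0,1]$: under the mapping-torus gluing, this descends in $M$ to a disk $R$ with embedded interior and boundary $\tau \cup h(\tau) \cup (\text{two arcs on } \bd M)$, where the boundary self-identifications of $R$ correspond precisely to points of $\tau \cap h(\tau)$. Placing $R$ in general position with $\bd N(\tau)$, the pieces of $R$ inside the handlebody $H$ become properly embedded disks there. I expect that combining innermost-disk reductions on these disks with a sutured-manifold decomposition of $H$ (with sutures coming from both the fibration and the tunnel) will yield the desired bigons in $F$; this is the principal technical step.

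Finally, the second sentence of the theorem follows by choosing a representative of $h$ within its isotopy class so that $h(\tau) = \beta$ outside a collar of $\bd F$. Any residual interaction inside the collar is realized by Dehn twists around $\bd F$, which is the ``twisting around the boundary'' mentioned in the statement.
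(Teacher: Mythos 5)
You have identified the right objects---the genus-two handlebody $H = M \rmv n(\tau)$, minimal position, a bigon criterion, and the rectangle $R = \tau \times I$, which after cutting $M$ along $F$ is exactly the vertical product disk the paper calls $E'$---but the step you defer (``I expect that combining innermost-disk reductions \dots with a sutured-manifold decomposition of $H$ \dots will yield the desired bigons'') is precisely where all of the difficulty lies, and as sketched it cannot work. The disk $R$ and its restriction to $H$ exist for \emph{every} properly embedded arc in the fiber of \emph{every} fibered link, whether or not the arc is a tunnel and whether or not $M \rmv n(\tau)$ is a handlebody; since a generic arc in a fiber (for instance one displaced far by a pseudo-Anosov monodromy) is \emph{not} disjoinable from its image, no argument built only on $R$, innermost disks, and an unspecified sutured decomposition can succeed unless it genuinely invokes compressibility of $\bd H$ in $H$. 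That is how the paper's proof of Proposition \ref{Proposition:TunnelCleanAfterIsotopy} uses the tunnel hypothesis: it takes a compressing disk $D'$ for $\bd H$, normalized with respect to the annulus $\bd n(\tau) \rmv \bd M$ and with minimal intersection with the cut-open fiber (Lemma \ref{Lemma:SpecialDisksExist}), cuts it along the fiber into ``special disks'' in $F \times I$ (Lemma \ref{Lemma:OneSpecialArc}), and then runs a lexicographic minimality argument comparing a minimal special disk $D$ with the product disk $E'$, eliminating intersection arcs of types 0, I and II by boundary compressions, with delicate case analysis involving $\tau_2$--arcs and arcs spiralling around $\bd F$. None of this, nor any substitute for it, appears in your proposal even in outline, and it is not clear what sutured decomposition you intend or how it would output an innermost bigon in $F$.

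Two further points. Since only a free isotopy of $h(\tau)$ is permitted (endpoints slide along $\bd F$), the correct minimality criterion is not ``no bigons rel endpoints'' but ``no bigons and no half-bigons cut off together with $\bd F$''; this is the paper's Lemma \ref{Lemma:NoDisksOrTriangles}, and the half-bigon case is what powers the spiralling contradictions in the type I/II analysis. Indeed intersections near $\bd F$ genuinely cannot always be removed rel endpoints, because of boundary twisting (Section \ref{section:dehntwists}); that is why the theorem is stated up to free isotopy. Finally, your last paragraph correctly mirrors the paper's passage from Proposition \ref{Proposition:TunnelCleanAfterIsotopy} to Theorem \ref{Theorem:TunnelAlmostClean}, but it presupposes the disjointness result and should also record the easy reductions the paper makes there ($F$ is not a disk, the annulus case is immediate, $|\bd F| \leq 2$, and $M$ is not a handlebody while $M \rmv n(\tau)$ is).
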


Johnson \cite{JohSBGTHS} investigated closed surface bundles with genus two Heegaard splittings. Johnson's work gives a description of the monodromy of a fibred tunnel number one knot, but it does not tell us about the case of a two-component link. Sakuma proved that tunnels are, in fact, clean for once-punctured torus bundles, \cite{SakUTCDPTBOC}. According to a survey article by Sakuma, \cite{SakTGAUT}, this result was proven for arbitrary punctured surface bundles by Kobayashi and independently by Johannson. However, both references are talks, and the result cannot hold in general for punctured surface bundles, as we will discuss in Section \ref{section:dehntwists}.

This paper is organized as follows: Section \ref{section:definitions} details definitions and background necessary for the statement and proof of the main theorem, found in Section \ref{section:analyzing}. Section \ref{section:dehntwists} discusses limitations of the theorem owing to difficulties associated with fractional Dehn twists around the boundary of the fiber surface. And finally, Section \ref{section:cusps} provides an application to bounding the cusp area for hyperbolic, fibered knots. 

The authors would like to give special thanks to Ken Baker, Kai Ishihara, and Dave Futer, who helped to improve this paper substantially.


\section{Definitions and Background}\label{section:definitions}

\begin{Definition} A link $K$ is said to have \emph{tunnel number one} if there exists an arc $\tau$ (the \emph{tunnel}) properly embedded in the link exterior so that the exterior of $K \cup \tau$ is a (genus two) handlebody.
\end{Definition}

A tunnel number one link can therefore have at most two link components, and in this case, the tunnel must have one endpoint on each component.
Tunnel number one knots and links have been studied in great depth (see, for example, \cite{SchTNOKSPC},  \cite{GoReTDTNOKL}, \cite{MoSaYoITNOK}, \cite{HiTeToTNOKHPP}). Recently, Cho and McCullough have given a bijective correspondence between tunnel number one knots (with their tunnels) and a subset of vertices of a certain tree related to a subcomplex of non-separating disks in a genus two handlebody \cite{ChoMcCTKT}. They are further able to parameterize all tunnel number one knots by a sequence of `cabling' operations (see \cite{ChoMcCCSTTK} and \cite{ChoMcCCKTUGS}). While the cabling operation is a very natural way of describing and modifying knots, it is generally not clear how properties of the exterior change. 

\begin{Definition}
Let $K \subset S^3$ be a link. A \emph{Seifert surface} for $K$ is a compact, orientable surface $F$, with no closed components, embedded in $S^3$ such that $\bd F = K$.
\end{Definition}

\begin{Definition}
A map $f \colon E \to B$ is a \emph{fibration with fiber $F$} if, for every point $p \in B$, there is a neighborhood $U$ of $p$ and a homeomorphism $h \colon f^{-1}(U) \to U \times F$ such that $f = \pi_1 \circ h$, where $\pi_1 \colon U \times F \to U$ is projection to the first factor. The space $E$ is called the \emph{total space}, and $B$ is called the \emph{base space}. Each set $f^{-1}(b)$ is called a \emph{fiber}, and is homeomorphic to $F$.
\end{Definition}

\begin{Definition}
A link $K \subset S^3$ is said to be \emph{fibered} if there is a fibration of $S^3 \setminus n(K)$ over $S^1$, and the fibration is well-behaved near $K$. That is, each link component $K'$ of $K$ has a neighborhood $S^1 \times D^2$, with $K' = S^1 \times \set{0}$ such that $f|_{S^1 \times (D^2 \setminus \set{0})}$ is given by $(x, y) \to \frac{y}{|y|}$.
\end{Definition}

\begin{Definition}
Let $K$ be a fibered link in $S^3$ with fiber $F$. Then $S^3 \setminus n(K)$ can be obtained from $F \times I$ by the identification $(x, 0) \sim (h(x), 1)$ for $x \in F$, where $h \colon F \to F$ is an orientation-preserving homeomorphism that is the identity on $\bd F$. We call $h$ a \emph{monodromy map}.
\end{Definition}

Note that $h$ is well-defined up to conjugation by an element of the mapping class group of $F$, and a choice of marking on $\bd n(K)$ to distinguish the meridian(s) of $K$. In particular, if $\widetilde{h}$ differs from $h$ by a product of Dehn twists about a curve parallel to $\bd F$, then $(F \times I) / h \cong (F \times I) / \widetilde{h}$. However, Dehn filling the torus boundary along the curve defined by $\set{pt.} \times I$ in each case may result in different closed $3$-manifolds, related by $\pm (\frac{1}{n})$--surgery. 

Fibered knots, too, have been studied in great depth (see, for example, \cite{BirRTFKTMM}, \cite{HarHTCAFKL}, \cite{NakCGFK}, \cite{BaJoKlTNOGOFK}). Stallings described a pair of operations on fibered links that result in new fibered links, what are now called the \emph{Murasugi sum} and \emph{Stallings twists}, \cite{StaCFKL}. Harer then showed that twists and a certain type of Murasugi sum called \emph{Hopf plumbing} (and its inverse, \emph{Hopf de-plumbing}) were sufficient to transform any fibered link into any other fibered link, \cite{HarHTCAFKL}. (In fact, recent work of Giroux and Goodman showed that Stallings twists are not necessary, \cite{GirGooOSEOB3M}.) 

\begin{Definition}
Let $F$ be a Seifert surface for a link $L$.
Let $\alpha$ be an arc properly embedded in $F$.
\emph{Hopf plumbing along} $\alpha$ is a change in the surface $F$ within a neighborhood of the arc $\alpha$, as shown in Figure \ref{Figure:plumbing}. That is, a disk is attached to $F$ along two sub-arcs of its boundary. The positioning of the disk is defined by $\alpha$, and the disk contains a full twist relative to $F$. Given $F$ and $\alpha$ there are two ways to perform Hopf plumbing, distinguished by the handedness of this twisting.
The result is a new surface $F'$ and a new link $K' = \bd F'$.
\begin{figure}[hbtp]
\begin{center}
\includegraphics[width=6cm]{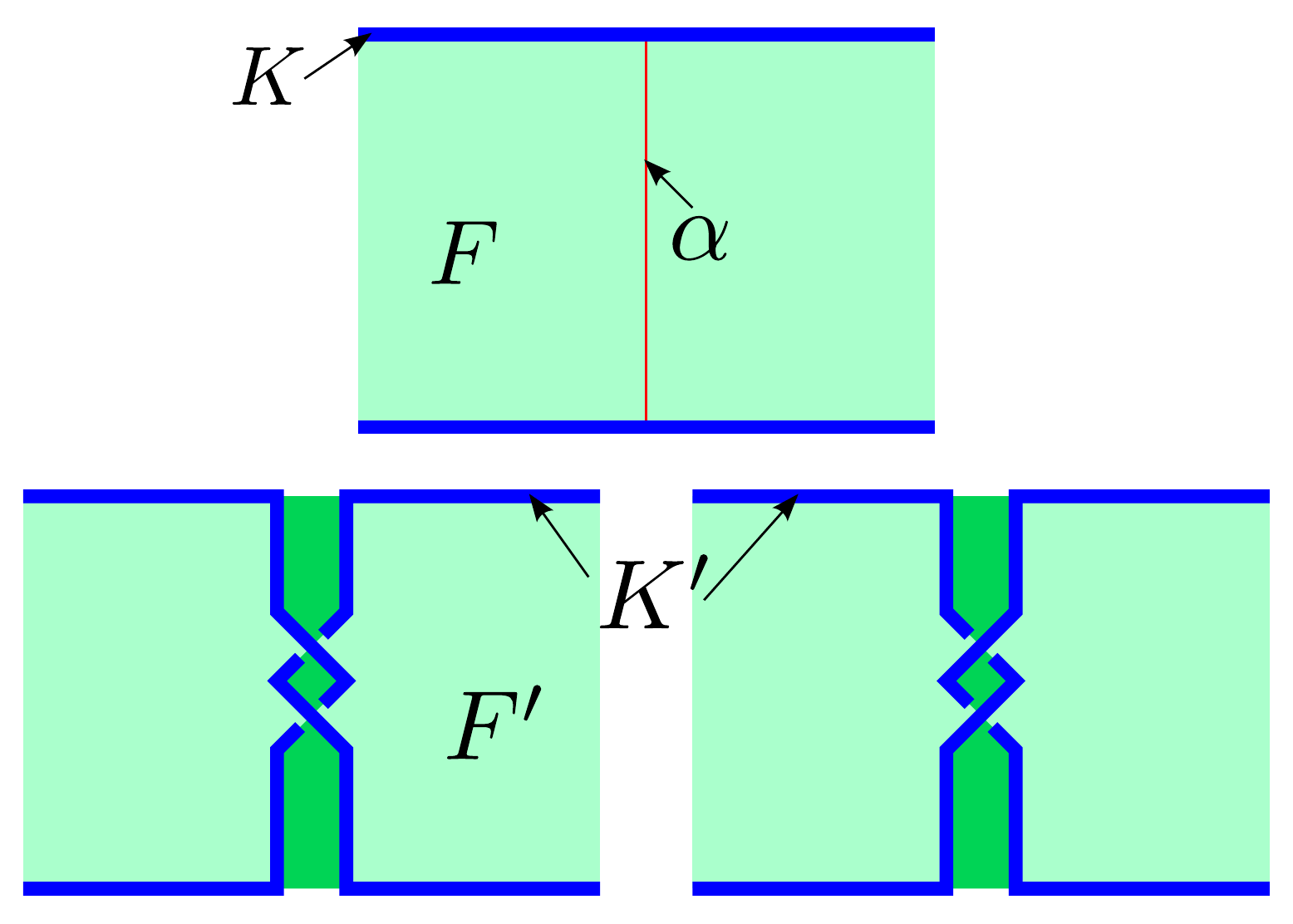}
\caption{Hopf plumbing is a change in a surface $F$ in the neighborhood of an arc $\alpha$.}
\label{Figure:plumbing}
\end{center}
\end{figure}
\end{Definition}

\begin{Note} Suppose $F$ is a Seifert surface for the link $\bd F$, and Hopf plumbing results in a Seifert surface $F'$ for the link $\bd F'$. Then $F$ is a fiber surface if and only if $F'$ is a fiber surface (see \cite{GabMSNGOII}).
\end{Note}

\begin{Note} De-plumbing a Hopf band corresponds exactly to cutting the fiber surface along an arc that is clean and alternating with respect to the monodromy. (Here, \emph{alternating} means that if $\alpha \times [0, 1]$ is a small product neighborhood of the arc $\alpha$ in $F$, then the image of $\alpha$ intersects both $\alpha \times \set{0}$ and $\alpha \times \set{1}$ in a neighborhood of the endpoints. Otherwise, say that $\alpha$ is \emph{non-alternating}.) This is implicit in work of Gabai (\cite{GabDFLS3}), and attributed to Sakuma (\cite{SakMDCSSUO}). For a proof, see Coward--Lackenby \cite{CoLaUGOK}.
\end{Note}

\begin{Definition}
Let $c$ be a simple closed curve, embedded and essential in a fiber surface $F$ in a manifold $M$. Call $c$ a \emph{twisting curve} if $c$ bounds an embedded disk in $M$, and the framing of $c$ from the disk agrees with the framing of $c$ from $F$. In this case, performing a $\pm 1$-Dehn surgery on $c$ is called a \emph{Stallings twist}. See Figure \ref{Figure:stallings}.
\begin{figure}[hbtp]
\begin{center}
\includegraphics[width=11.5cm]{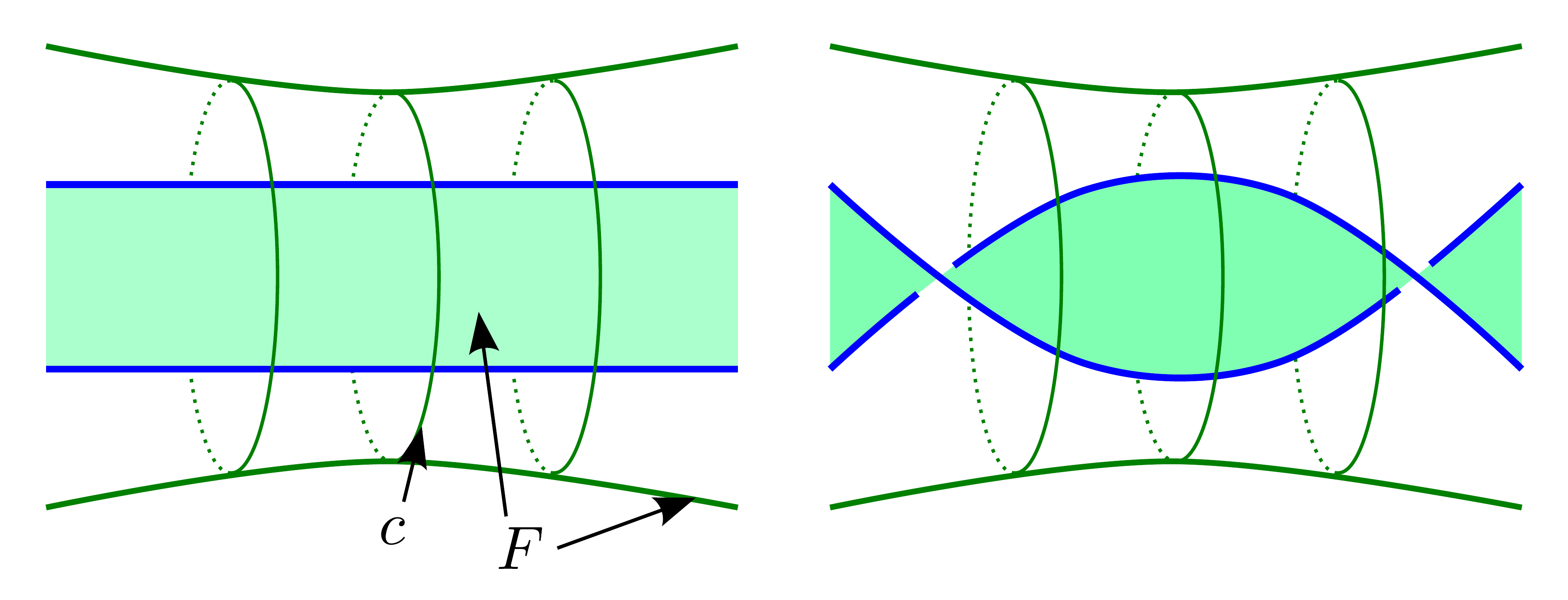}
\caption{A Stallings twist results from a $\pm 1$-Dehn surgery on an unknotted curve in the fiber surface.}
\label{Figure:stallings}
\end{center}
\end{figure}
\end{Definition}

\begin{Note}
Stallings proved that the image of a fibered link under such a twist was another fibered link, with fiber surface homeomorphic to the original fiber surfrace \cite{StaCFKL}.
\end{Note}

\begin{Note} Yamamoto (\cite{YamSTRPDHB}) proved that the existence of a Stallings twist of a certain type (type $(0, 1)$) corresponds exactly to twisting \emph{around} an arc that is clean and non-alternating with respect to the monodromy (i.e.\ that the disk bounded by the twisting curve intersects the fiber surface exactly in such an arc).
\end{Note}

These operations happen to interact very nicely with respect to unknotting tunnels. In \cite{RatTOFL}, the second author showed that in a tunnel number one, fibered link exterior, an unknotting tunnel can be isotoped to lie in a fiber surface. If a Hopf plumbing is performed along a tunnel lying in the fiber surface, then the resulting link is fibered and is again tunnel number one. Conversely, if de-plumbing a Hopf band corresponds to cutting along a tunnel lying in the fiber surface, then the resulting link is fibered and tunnel number one. Similarly, when twisting \emph{around} a tunnel lying in a fiber surface, the resulting link is fibered and still tunnel number one. 

Hence, a correspondence between tunnel arcs and the pre-conditions to perform Hopf (de-)plumbing or Stallings twists would provide a recipe for constructing tunnel number one, fibered knots. 

    
\section{Analyzing a Tunnel in a Fiber}\label{section:analyzing}

By \cite{RatTOFL}, we know that an unknotting tunnel for a fibered, tunnel number one link can be isotoped to lie in a fiber. Henceforth in this paper, we consider unknotting tunnels to be properly embedded arcs in a fiber surface. We now investigate \emph{how} such a tunnel can lie in a fiber.
The main aim of this section is to prove Theorem \ref{Theorem:TunnelAlmostClean}, but the majority of the work lies in establishing the following proposition.

\begin{Proposition}\label{Proposition:TunnelCleanAfterIsotopy}
Let $F$ be a compact, orientable surface with genus at least one and either one or two boundary components, and let $h \colon F \to F$ be a homeomorphism such that $h(x) = x$ for $x \in \bd F$.
Let $M = (F \times I)/h$, and denote by $F$ the surface $F \times \set{0}$ in $M$.
Assume $M$ is not a handlebody.
Let $\tau$ be an arc properly embedded in $F$ such that $M \rmv n(\tau)$ is a (genus two) handlebody, where $n(\tau)$ is a regular neighborhood of $\tau$ in $M$.
Then there is an arc that is freely isotopic in $F$ to $h(\tau)$ and is disjoint from $\tau$.
\end{Proposition}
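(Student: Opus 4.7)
The plan is to leverage the handlebody structure of $H := M \rmv n(\tau)$ together with the properly embedded surface $F_\tau := F \rmv n(\tau)$. I first cut $M$ along the fiber $F$ to obtain $F \times I$ with the identification $(x,0) \sim (h(x),1)$; under this identification, the arc $\tau \subset F$ lifts to the two disjoint arcs $\tau \times \{0\}$ on the bottom face and $h(\tau) \times \{1\}$ on the top face. Removing $n(\tau)$ from $M$ corresponds to removing half-ball neighborhoods $N_0, N_1$ of these two arcs from $F \times I$, producing a manifold $W = (F \times I) \rmv (N_0 \cup N_1)$ from which $H$ is recovered by gluing $F_\tau \times \{0\}$ to $(F \rmv n(h(\tau))) \times \{1\}$ via $h|_{F_\tau}$. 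In particular, cutting $H$ along $F_\tau$ yields $W$, which is topologically a copy of $F \times I$.

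With this setup, I would put $\tau$ and $h(\tau)$ in minimal geometric position in $F$ via free isotopy and suppose, toward a contradiction, that they intersect in $\Int(F)$. The aim is to extract a bigon (or half-bigon) between $\tau$ and $h(\tau)$ from the structure of the genus two handlebody $H$, contradicting minimality. Concretely, I would choose a meridian disk $D$ of $H$, put it in general position with $F_\tau$, and pass to an outermost sub-disk $D'$ of $D \rmv F_\tau$. Such a $D'$ has boundary composed of an arc in $F_\tau$ and an arc on $\bd H$; depending on whether the latter lies on the cusp torus coming from $\bd M$ or on the tube coming from $\bd n(\tau)$, $D'$ either boundary-compresses $F_\tau$ in $H$ or, when lifted to $W$ and projected via the $I$-factor, yields a disk in $F$ whose boundary is built from sub-arcs of $\tau$, $h(\tau)$, and possibly $\bd F$. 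This projected disk would be the desired (half-)bigon, allowing a reduction of $|\tau \cap h(\tau)|$.

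The main obstacle is twofold. First, compressions of $F_\tau$ in $H$ that do not project to bigons must be ruled out; here incompressibility of the fiber $F$ in $M$ should force any compressing disk for $F_\tau$ to cap off against a disk in $F$ to a 2-sphere in $M$ that bounds a ball, and such a ball cannot carry an essential portion of $\tau$, permitting simplification. Second, the combinatorics of how $\bd D'$ meets $\bd n(\tau)$ versus $\bd M$ must be tracked carefully through the identification used to reconstitute $H$ from $W$: this dictates whether the resulting bigon has endpoints on $\bd F$ (a half-bigon, which exploits the ``free'' in freely isotopic) or in $\Int(F)$. The hypothesis that $M$ is not a handlebody is used precisely to avoid degenerate configurations in which $F_\tau$ compresses so heavily that the extraction of a bigon fails. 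Once a bigon or half-bigon is produced, the standard bigon criterion for arcs on surfaces contradicts minimality and completes the proof.
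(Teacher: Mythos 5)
Your setup is the same as the paper's: cut $M$ along the fiber to recover $F \times I$ with two copies $\tau_0, \tau_1$ of the tunnel in the two horizontal faces, take a meridian disk $D$ of the genus two handlebody $H = M \rmv n(\tau)$, and pass to an outermost subdisk of $D \rmv F_\tau$. The gap is in the decisive step. You assert that the outermost subdisk $D'$, ``projected via the $I$-factor,'' yields a disk in $F$ whose boundary is built from sub-arcs of $\tau$, $h(\tau)$ and $\bd F$, and that this is the desired (half-)bigon. But the projection $\pi \colon F \times I \to F$ does not carry an embedded disk in $F \times I$ to an embedded disk in $F$: the image of $D'$ is in general an immersed region, and even its projected boundary --- one arc of $D \cap F_\tau$ together with possibly \emph{many} arcs parallel to $\tau_0$, many parallel to $\tau_1$, and several arcs on $\bd F$ --- need not bound anything in $F$, let alone produce an innermost embedded bigon that reduces $|\tau \cap h(\tau)|$. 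What your outermost disk actually gives is exactly what the paper calls a \emph{special disk}: an essential disk in $F \times I$ meeting $F \times \set{0,1}$ only in arcs parallel to $\tau_0$ or $\tau_1$ plus at most one other essential arc. The paper then needs a second, substantially harder stage: choosing a special disk minimizing a lexicographic complexity and analyzing its intersection with the vertical product disk $E' = \tau_1 \times I$, eliminating three types of intersection arcs via boundary-compressions that must be shown to preserve specialness and decrease complexity. That analysis, not the existence of the outermost disk, is what forces $\pi(\tau_0) \cap \pi(\tau_1) = \emptyset$ after the free isotopy, and nothing in your outline substitutes for it.

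Two further points would also need repair. First, your claim that incompressibility of $F$ forces any compressing disk for $F_\tau$ to cap off to a sphere is not right: $F_\tau$ is typically compressible and boundary-compressible in the handlebody $H$ (the proof lives off its boundary-compressions), and the essential disks of $F \times I$ that arise are not compressions of the closed-up fiber. Second, in the two-component case the outermost arc of $D \cap F_\tau$ can be what the paper calls a $\tau_2$--arc (parallel in $F$ to $\tau$ banded along a component of $\bd F$), and arcs of $\bd E' \cap F$ can spiral around a boundary component; these configurations defeat a naive bigon extraction and require the separate arguments of Lemmas 3.5 and 3.9 and the ``spiraling'' analysis in the type I and type II cases. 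As written, your argument stalls precisely where the real work of the proposition begins.
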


\begin{proof}
As $M$ is not a handlebody but $M \rmv n(\tau)$ is, the arc $\tau$ must be essential in $F$.
Set $F' = F \rmv n(\tau)$.
In addition, in $F \times I$ let $\tau_1$ be the copy of $\tau$ in $F \times \set{1}$ and $\tau_0$ the copy in $F \times \set{0}$.
Observe that $\pi(\tau_0) = h(\pi(\tau_1))$, where $\pi \colon F \times I \to F$ is projection. As $h|_{\bd F} = \Id$, the endpoints of $\pi(\tau_0)$ and $\pi(\tau_1)$ coincide.
Recall that $F \times I$ is irreducible and $F \times \set{0,1}$ is incompressible in $F \times I$.

Let $A$ be the annulus $\bd n(\tau) \rmv \bd M$. Then $A$ is divided into two rectangles by $F$. Let $A_1$ be the rectangle incident to $F \times \set{1}$, and $A_0$ the rectangle incident to $F \times \set{0}$. 
We may think of $A_1$ as a neighborhood of $\tau_1$ contained in $F \times \set{1}$, and similarly for $A_0 \subset F \times \set{0}$.

The proof of Proposition \ref{Proposition:TunnelCleanAfterIsotopy} works by controlling certain disks within $M \rmv n(\tau)$, in particular how they relate to the annulus $A$. We now build up some language to describe these disks.

\subsection{Special Arcs}

Let $D$ be a disk properly embedded in $F \times I$ such that $\bd D$ is transverse to $\bd F \times \set{0,1}$.

\begin{Lemma}
\label{Lemma:DisksCannotMissF}
No essential disk in $F \times I$ can be disjoint from $F \times \set{i}$ for $i \in \set{0,1}$.
In particular, if $\bd D \cap (\bd F \times \set{0,1}) = \emptyset$ then $D$ is inessential in $F \times I$.
\end{Lemma}
\begin{proof}
Without loss of generality, suppose that $D$ is an essential disk in $F \times I$ that is disjoint from $F \times \set{1}$.
Then every arc in $\bd D \cap (\bd F \times I)$ is inessential in $\bd F \times I$. 
On the other hand, any simple closed curve in $\bd F \times I$ is either trivial or parallel to a component of $\bd F \times \set{0}$.
We may therefore isotope $\bd D$ into $F \times \set{0}$. 
This contradicts that $F \times \set{0,1}$ is incompressible in $F \times I$. Thus no such disk exists.
\end{proof}

\begin{Definition}
If $\bd D \cap (\bd F \times \set{0,1}) \neq \emptyset$ then the points of 
$\bd D\cap (\bd F \times \set{0,1})$ divide $\bd D$ into a finite set of sub-arcs of the following six possible types.
\begin{enumerate}
\item Sub-arcs in $F \times \set{0}$ parallel in $F$ to $\tau_0$; call these \emph{$\tau_0$--arcs}.
\item Sub-arcs in $F \times \set{1}$ parallel in $F$ to $\tau_1$; call these \emph{$\tau_1$--arcs}.
\item Sub-arcs in $\bd F \times I$; call these \emph{boundary arcs}.
\item Sub-arcs in $F \times \set{0}$ or $F \times \set{1}$ that are trivial in $F$; call these \emph{extra arcs}.
\item Sub-arcs in $F \times \set{i}$ for $i \in \set{0,1}$ that are essential in $F$ but are not $\tau_i$--arcs and are disjoint from $\tau_i$; call these \emph{special arcs}.
\item Sub-arcs in $F \times \set{i}$ for $i \in \set{0,1}$ that are essential in $F$, are not $\tau_i$--arcs, and intersect $\tau_i$; call these \emph{bad arcs}.
\end{enumerate}
        
For $i \in \set{0,1}$, label each sub-arc of $\bd D$ with $i$ if it is contained in $F \times \set{i}$.
\end{Definition}
        
None of the disks that will be of interest to us will have bad arcs, so we will mostly not consider them any further.

\begin{Definition}
An extra arc that is outermost in $F \times \set{i}$ can be isotoped off $F \times \set{i}$, along the subdisk it cuts off from $F \times \set{i}$, joining two sub-arcs on $\bd F \times I$ into a single boundary arc. Call this a \emph{tightening-move}. Notice that this does not affect the isotopy type of any essential arc in $F \times \set{0, 1}$, and has the effect of deleting an $i$--label from the labeling of $\bd D$.
\end{Definition}

If $|\bd F| = 2$, the following definition gives two isotopy classes of arcs in $F$ that will be of special interest to us. These arcs are boundary-parallel in $F'$, and have both endpoints on the same component of $\bd F$. The two isotopy classes are distinguished by which component of $\bd F$ contains the endpoints of the arc.

\begin{Definition}
Call a special arc a \emph{$\tau_2$--arc} if it is parallel in $F$ to the union of the two arcs in $\bd A_i \rmv \bd F$ and one of the two components of $\bd F \rmv A_i$. See Figure \ref{Figure:tau2arc}. Roughly speaking, it runs parallel to $\tau_i$, around $\bd F$ while avoiding $\tau_i$, and then back parallel to $\tau_i$.
\begin{figure}[hbtp]
\begin{center}
\includegraphics[height=2.75cm]{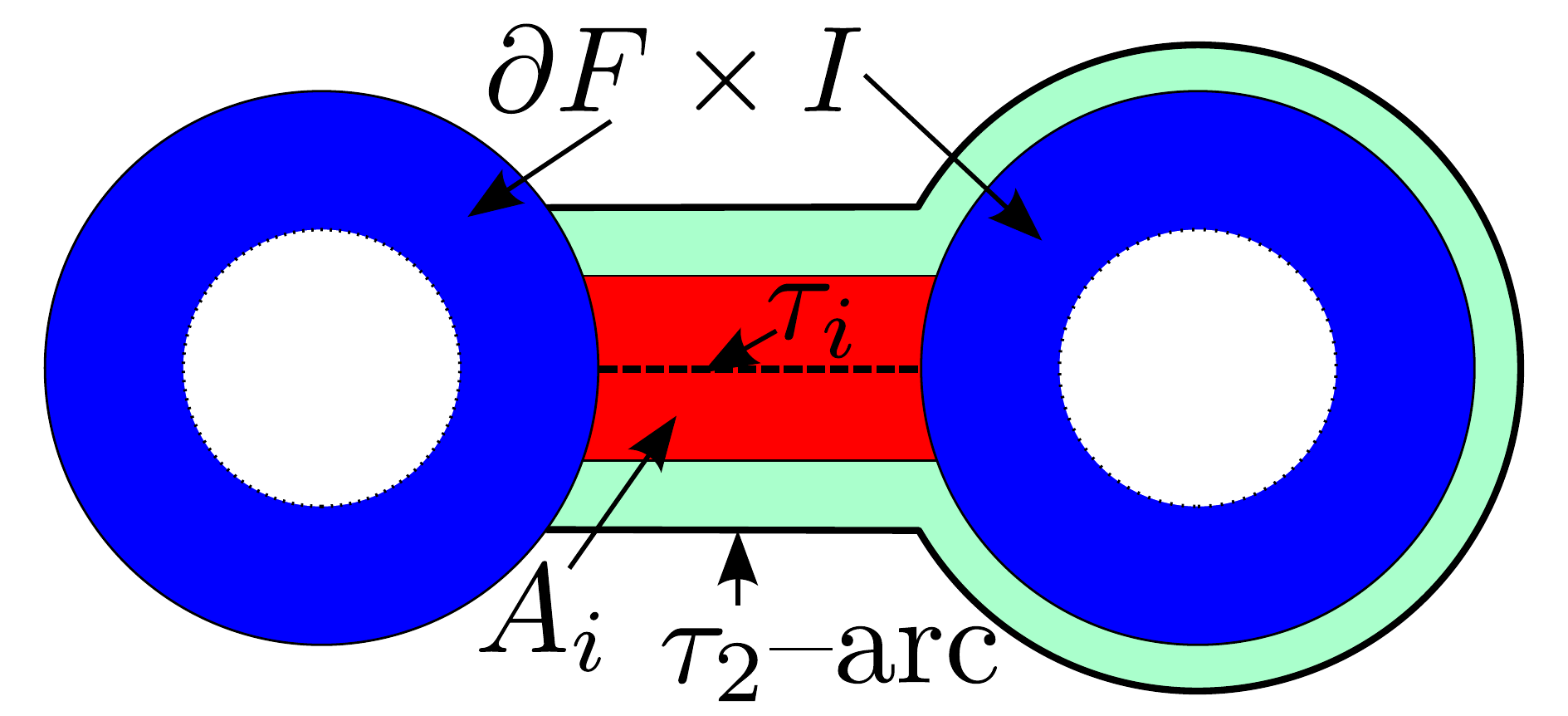}
\caption{A $\tau_2$--arc runs parallel to $\tau_i$, around $\bd F$, and back parallel to $\tau_i$.}
\label{Figure:tau2arc}
\end{center}
\end{figure}
\end{Definition}

Although not important for our purposes, it is interesting to note that, if a $\tau_2$--arc $\alpha$ exists in $\bd D$, pushing part of $\alpha$ across the disk of $F'$ cut off by $\alpha$ into the component of $\bd F \times I$ that does not contain the endpoints of $\alpha$ would change the special arc $\alpha$ into two $\tau_i$--arcs and one boundary arc.

The significance of $\tau_2$--arcs is their appearance in following lemma.

\begin{Lemma}\label{Lemma:OneSpecialArc}
If $\bd D$ contains exactly one special arc and no bad arcs then either $D$ is essential in $F \times I$, or $|\bd F|=2$ and the special arc is a $\tau_2$--arc. 
\end{Lemma}
\begin{proof}
Without loss of generality, assume the special arc $\alpha$ is labelled $1$. Perform as many tightening-moves 
 as possible to remove all extra arcs. This neither creates any new special or bad arcs, nor alters $\alpha$. 
Then $\bd D \cap (F \times \set{1})$ consists of $\alpha$ together with some number of $\tau_1$--arcs.

Suppose $D$ is boundary parallel in $F \times I$, and let $D'$ be the disk in $\bd(F \times I)$ to which $D$ is parallel. Consider the two components of $(F \times \set{1}) | \bd D$ adjacent to $\alpha$, one of which is a subsurface of $D'$. Call this $D''$. 
Note that $D''$ is planar, and all but one of the components of $\bd D''$ are contained in $\Int(D')$ and therefore
are components of $\bd F$. Any such components of $\bd F$ must also bound disks in $\bd(F \times I)$. 
Since there are no such components of $\bd F$, we see that $D''$ is a disk.
There can be at most two $\tau_1$--arcs in $\bd D''$, and exactly one copy of $\alpha$. 

If $\bd D''$ contained no $\tau_1$--arcs then $D''$ would provide an isotopy of $\alpha$ into $\bd F$, which is not possible. If $\bd D''$ contained exactly one $\tau_1$--arc then $D''$ would provide an isotopy of $\alpha$ into $\tau_1$, which is also impossible. Therefore $\bd D''$ contains two $\tau_1$--arcs. Notice that $\bd D'' \rmv \alpha$ is contained in $\bd F'$.
Suppose that $|\bd F|=1$. Then $\bd F'$ has two components, with one copy of $\tau$ in each. Therefore no such disk $D''$ can exist if $|\bd F|=1$. Hence $|\bd F|=2$. The disk $D''$ demonstrates that $\alpha$ is a $\tau_2$--arc.
\end{proof}

\subsection{Special Disks}
Lemma \ref{Lemma:OneSpecialArc} shows that disks whose boundary contains no bad arcs and only one special arc are important. This motivates the following definition.
        
\begin{Definition}
\label{Definition:SpecialDisk}
Given a disk $D$ properly embedded in $F \times I$ such that $\bd D$ is transverse to $\bd F \times \set{0,1}$,
say that $D$ is \emph{special} if it is essential in $F \times I$, and there are no bad arcs and at most one special arc in $\bd D$.
We will call $D$ a \emph{$0$-special} or \emph{$1$-special} disk depending on the label and location of the special arc if one exists. If there is no special arc, say the disk is $1$-special.
\end{Definition}

\begin{Lemma}\label{Lemma:SpecialDisksExist} 
There exist special disks in $(M \rmv n(\tau)) | F'$.
\end{Lemma}

\begin{proof} 
As $M \rmv n(\tau)$ is a genus two handlebody, we know that $\bd (M \rmv n(\tau))$ is compressible in $M \rmv n(\tau)$.
Let $D'$ be a compression disk such that $\bd D' \cap A$ consists of straight arcs, each essential in $A$ and running from one component of $A_i \cap \bd F$ to the other, and such that $|D' \cap F'|$ is minimal among such disks. 
Since $\bd M$ is incompressible in $M \rmv n(\tau)$, we know that $\bd D'$ runs across $A$ at least once.

If $D' \cap F' = \emptyset$ then $D'$ is a disk in $F \times I$ and $\bd D'$ contains no special or bad arcs. 
Note that $D'$ is essential in $F \times I$ since it is essential in $M \rmv n(\tau)$ and $F'$ is not a disk. Therefore $D'$ is a special disk.

If $D' \cap F' \neq \emptyset$, then notice that $D' \cap F'$ consists only of arcs, since circles of intersection innermost in $D'$ and essential in $F$ would give rise to compressions for $F$, and inessential ones could be removed to reduce $|D' \cap F'|$. 
Moreover, as $\tau$ is essential in $F$, the minimality of $|D' \cap F'|$ implies that every arc of $D' \cap F'$ is essential in $F$. Knowing this, the minimality of $|D' \cap F'|$ further implies that no arc of $D' \cap F'$ is isotopic to $\tau$ in $F$.

Consider an arc $\alpha$ of $D' \cap F'$ that is outermost in $D'$. Then $\alpha$ cuts off a subdisk $D$ from $D'$. 
Now view $D$ as a disk in $F \times I$.
Without loss of generality, assume $\alpha$ is labeled $1$. Note also that $\alpha$ is a special arc.
Because $\bd D$ contains exactly one special arc and no bad arcs, by Lemma \ref{Lemma:OneSpecialArc} either the disk $D$ is essential in $F \times I$ as required, or
$\alpha$ is a $\tau_2$--arc.
In this case, $\alpha$ would cut off a disk from $F'$. This disk might contain other arcs of $D' \cap F'$.
Boundary compressing $D'$ along this disk would reduce $|D' \cap F'|$, creating at least two disks, at least one of which would contradict the minimality condition in the choice of $D'$.
Therefore $D$ is essential, and so is a special disk.
\end{proof}

\begin{Lemma}
If $D$ is an $i$-special disk in $F \times I$ for some $i \in \set{0,1}$ then $\bd D$ contains at least one $\tau_{1-i}$--arc.
\end{Lemma}
\begin{proof}
Without loss of generality, assume $D$ is $1$-special. Perform as many tightening-moves on $D$ as possible. This does not change that $D$ is $1$-special, and does not alter any $\tau_0$--arcs in $\bd D$.
Having done this, we see that $\bd D \cap (F \times \set{0})$ consists only of $\tau_0$--arcs.
As $D$ is essential in $F \times I$, Lemma \ref{Lemma:DisksCannotMissF} implies that there must be at least one arc of $\bd D \cap (F \times \set{0})$ remaining, which is therefore a $\tau_0$--arc.
\end{proof}

Now, consider the vertical product disks $E_0' = \tau_0 \times I$, and $E_1' = \tau_1 \times I$. 
We would like to find an $i$-special disk $D$, for some $i \in \set{0,1}$, such that $\bd D$ and $\bd E_i'$ do not intersect on $F \times \set{1-i}$. Since $\pi(\bd E_i' \cap (F \times \set{1-i})) = \pi(\tau_i)$, and a $\tau_{1-i}$--arc in $\bd D$ projects under $\pi$ to $\pi(\tau_{1-i})$, this would show that $\pi(\tau_0)$ and $\pi(\tau_1)$ are disjoint. 
Such a statement is however not true in general unless we first allow a free isotopy of $\tau_0$ (see Section \ref{section:dehntwists} for more details).

As the remainder of the proof will take place within $F \times I$, the precise choice of monodromy $h$ is of no further significance (its importance lies in the existence of special disks). Recall that $\pi(\tau_0) = h(\pi(\tau_1))$. We will now therefore assume that the monodromy has been isotoped (including along the boundary) to minimize $|\pi(\tau_0) \cap \pi(\tau_1)|$. 
At a minimum, this isotopy must perturb the endpoints of $\tau_0$ so that they no longer coincide with those of $\tau_1$ under $\pi$.
We emphasize that changing $h$ rather than isotoping $\tau_0$ is only for notational convenience.
        
\begin{Definition}
\label{Definition:SizeSpecialDisk}
The \emph{size} of a special disk $D$ is the triple 
$(|\bd D \cap (F \times \set{0,1})|,|D \cap E_j'|, |\bd D \cap \bd E_j' \cap (F \times \set{0,1})|)$, where $D$ is a $j$-special disk. 
We will compare the size of two special disks using the lexicographical order. 
\end{Definition}

That is, we order disks first by the number of $\tau_0$--, $\tau_1$--, extra and special arcs, second by the number of arcs and simple closed curves of intersection with the product disk $E_j'$, and finally by the number of endpoints of these intersection arcs that lie on $F$.
We remark that part of the significance of this particular choice of size is that it allows us to compare 0-special and 1-special disks.

\medskip

It is worth noting that this situation looks similar to that found in Lemma 2.3 of \cite{GodHSSMMS}. It appears that one could conclude immediately that a special disk was \emph{boundary compressible towards} $F \times \set{1}$, and repeat such compressions until one arrived at a product disk. This is the idea of our proof, but we need to show some additional care as we want the arcs of $\bd D \cap (F \times \set{0, 1})$ to stay parallel to $\tau_0$ and $\tau_1$ so that we can conclude something about the tunnel. 

Since we know that special disks exist we may take a special disk with minimal size, and call it $D$. 
If $\bd D$ contains no special arc, then $D$ is $1$-special. Pick a $\tau_1$--arc and call it $\alpha$. On the other hand, if it does contain a special arc then we may assume without loss of generality that $D$ is $1$-special. In this case call the special arc $\alpha$.

\begin{Lemma}\label{Lemma:NoExtraArcs}
There are no extra arcs in $\bd D$.
\end{Lemma}
\begin{proof}
If there is an extra arc in $\bd D$, we can perform a tightening-move. This will reduce the number of extra arcs without changing the number of $\tau_0$--, $\tau_1$-- or special arcs. This therefore reduces the size of $D$, a contradiction.
\end{proof}

Lemma \ref{Lemma:NoExtraArcs} implies that $\bd D \cap (F \times \set{0})$ consists only of $\tau_0$--arcs. Let $E' = E_1'$.
Although it is not necessary, for notational convenience we will continue to assume that, for $i \in \set{0,1}$, all $\tau_i$--arcs are contained within the rectangle $A_i$ and run straight from one component of $A_i \cap \bd F$ to the other. 

\begin{Lemma}\label{Lemma:Tau1ArcsDisjoint}
Every arc of $\bd D$ on $F \times \set{1}$ is disjoint from $\bd E'$.
\end{Lemma}
\begin{proof}
Choose $\varepsilon > 0$ such that $(\bd F \times [1 - \varepsilon, 1)) \cap (\bd D \cup \bd E')$ consists of disjoint embedded arcs that are essential in the half-open annulus $\bd F \times [1 - \varepsilon, 1)$.
Let $F^+ = (F \times \set{1}) \cup (\bd F \times [1 - \varepsilon, 1))$.
Since $\bd D \cap (F \times \set{1})$ contains only $\tau_1$-- and special arcs, there is an isotopy of $\bd D \cap F^+$, fixed on $\bd F^+$, that makes $\bd D$ disjoint from $\bd E'$ on $F \times \set{1}$. 
See Figure \ref{Figure:Tau1ArcsDisjoint}.
Because $\bd E' \cap F^+$ is a single arc, this isotopy can be chosen so that it does not increase $|\bd D \cap \bd E' \cap F^+|$ at any point. 
Note that such an isotopy does not change the type of any arc of $\bd D \cap (F \times \set{1})$.
Therefore this means that the isotopy can be extended to an isotopy of $D$ that does not increase $|D \cap E'|$.
If $\bd D \cap \bd E' \cap (F \times \set{1}) \neq \emptyset$ before the isotopy then the isotopy strictly reduces the size of $D$, which is a contradiction. Thus no such isotopy is required and $\bd D \cap \bd E' \cap (F \times \set{1}) = \emptyset$.
\begin{figure}[hbtp]
\begin{center}
\includegraphics[width=10.5cm]{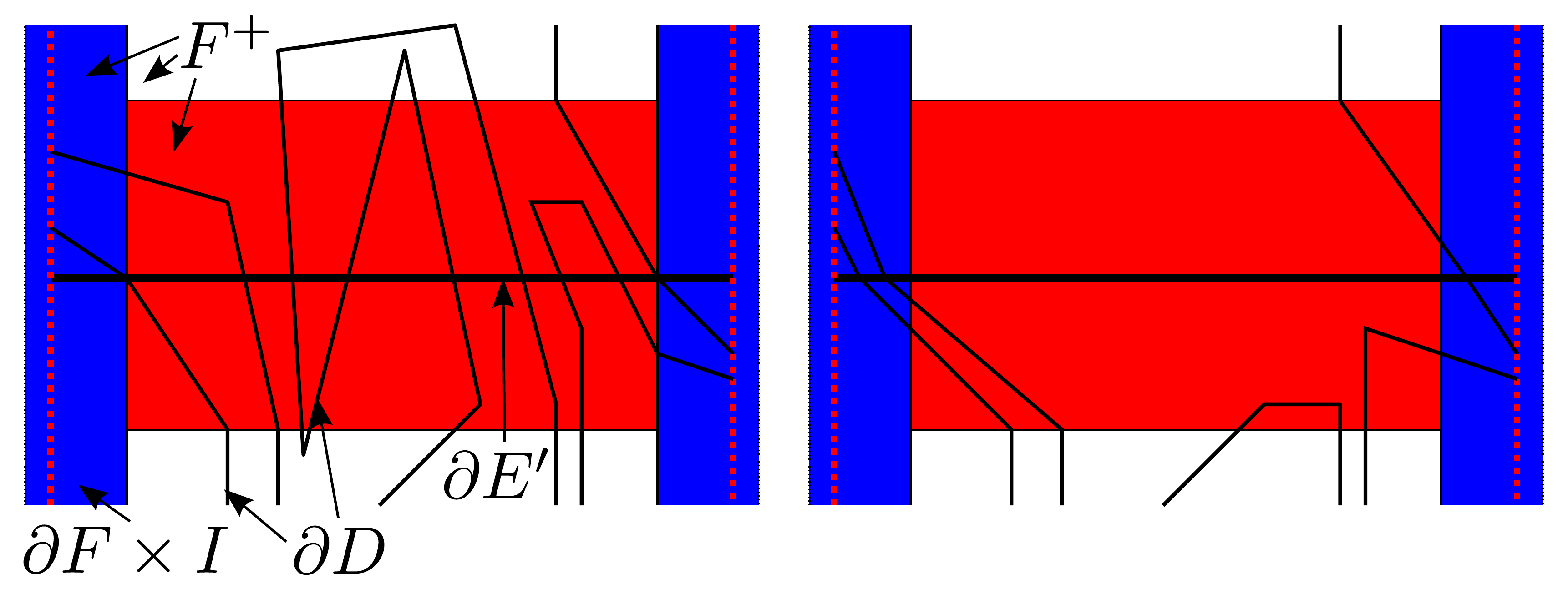}
\caption{Arcs of $\bd D \cap (F \times \set{1})$ can be made disjoint from the arc of $\bd E' \cap (F \times \set{1})$ without increasing $|D \cap E'|$.}
\label{Figure:Tau1ArcsDisjoint}
\end{center}
\end{figure}
\end{proof}

\begin{Lemma}\label{Lemma:IntersectingA0}
We may assume that the endpoints of the arc $\bd E' \cap (F \times \set{0})$ are disjoint from $A_0$, and that every arc of $\bd E' \cap A_0$ connects opposite sides of $A_0$ and intersects each $\tau_0$--arc of $\bd D$ exactly once.
\end{Lemma}
\begin{proof}
By Lemma \ref{Lemma:NoExtraArcs}, $\bd D \cap (F \times \set{0})$ consists only of $\tau_0$--arcs. We have assumed that each of these lies in $A_0$, connecting the two components of $A_0 \cap \bd F$.
By isotoping $A_0 \cap F'$ in $(F \times \set{0}) \rmv \bd D$, we may assume that $\bd E'$ is transverse to $\bd A_0$.

Consider the arcs of $\bd E' \cap A_0$.
Each of the two sides of $A_0$ on $\bd F$ contains at most one endpoint of these arcs. All other endpoints must lie on the two components of $A_0 \cap F'$.
Choose $\varepsilon > 0$ such that $((A_0 \cap \bd F) \times (0, \varepsilon]) \cap (\bd D \cup \bd E')$ consists of disjoint embedded arcs each having one endpoint on $(A_0 \cap \bd F) \times \set{0}$ and one endpoint on $(A_0 \cap \bd F) \times \set{\varepsilon}$.
Let $A_0^+ = A_0 \cup ((A_0 \cap \bd F) \times (0,\varepsilon])$.
As in the proof of Lemma \ref{Lemma:Tau1ArcsDisjoint}, there is an isotopy of $\bd D$ within $A_0^+$, fixed on $\bd A_0^+$, to minimize $|\bd D \cap \bd E' \cap A_0|$, and moreover this isotopy can be chosen so that it extends to an isotopy of $D$ that does not increase the size of $D$ (see Figure \ref{Figure:IntersectionOnA0}).
Again, if this isotopy strictly reduced $|\bd D \cap \bd E' \cap A_0|$ then it would strictly reduce the size of $D$, contradicting that $D$ was chosen to have minimal size. 
Therefore no such isotopy is needed, and the arcs of $\bd E' \cap A_0^+$ have minimal intersection in $A_0$ with the arcs of $\bd D \cap A_0^+$.

Let $\gamma$ be an arc of $\bd E' \cap A_0^+$.
If the endpoints of $\gamma$ lie on distinct components of $A_0 \cap F'$ then we see that $\gamma$ intersects each arc of $\bd D \cap A_0$ exactly once, and because $|\bd D \cap (F \times \set{0})|$ has not increased we know that this intersection occurs within $A_0$.
If the endpoints of $\gamma$ lie on the same component of $A_0 \cap F'$ then we find that $\gamma$ is disjoint from $\bd D$. In this case we may isotope $A_0 \cap F'$ to remove $\gamma$ from $\bd E' \cap A_0$ without affecting $\bd D \cap A_0$ (again see Figure \ref{Figure:IntersectionOnA0}).
If $\gamma$ has one endpoint on $A_0 \cap \bd F$ and the other on $A_0 \cap F'$ then $\gamma \cap A_0$ is disjoint from $\bd D \cap A_0$, and again we may isotope $\bd A_0$ to remove $\gamma$ from $\bd E' \cap A_0$.
Finally suppose that $\gamma$ has both endpoints on components of $A_0 \cap \bd F$. Then $\gamma$ is a $\tau_1$--arc. 
Since $\pi(\gamma)=\pi(\tau_0)$ this shows that $\tau$ and $h(\tau)$ are isotopic in $F$, and in this case the proof of Proposition \ref{Proposition:TunnelCleanAfterIsotopy} is complete.
\begin{figure}[hbtp]
\begin{center}
\includegraphics[width=10.5cm]{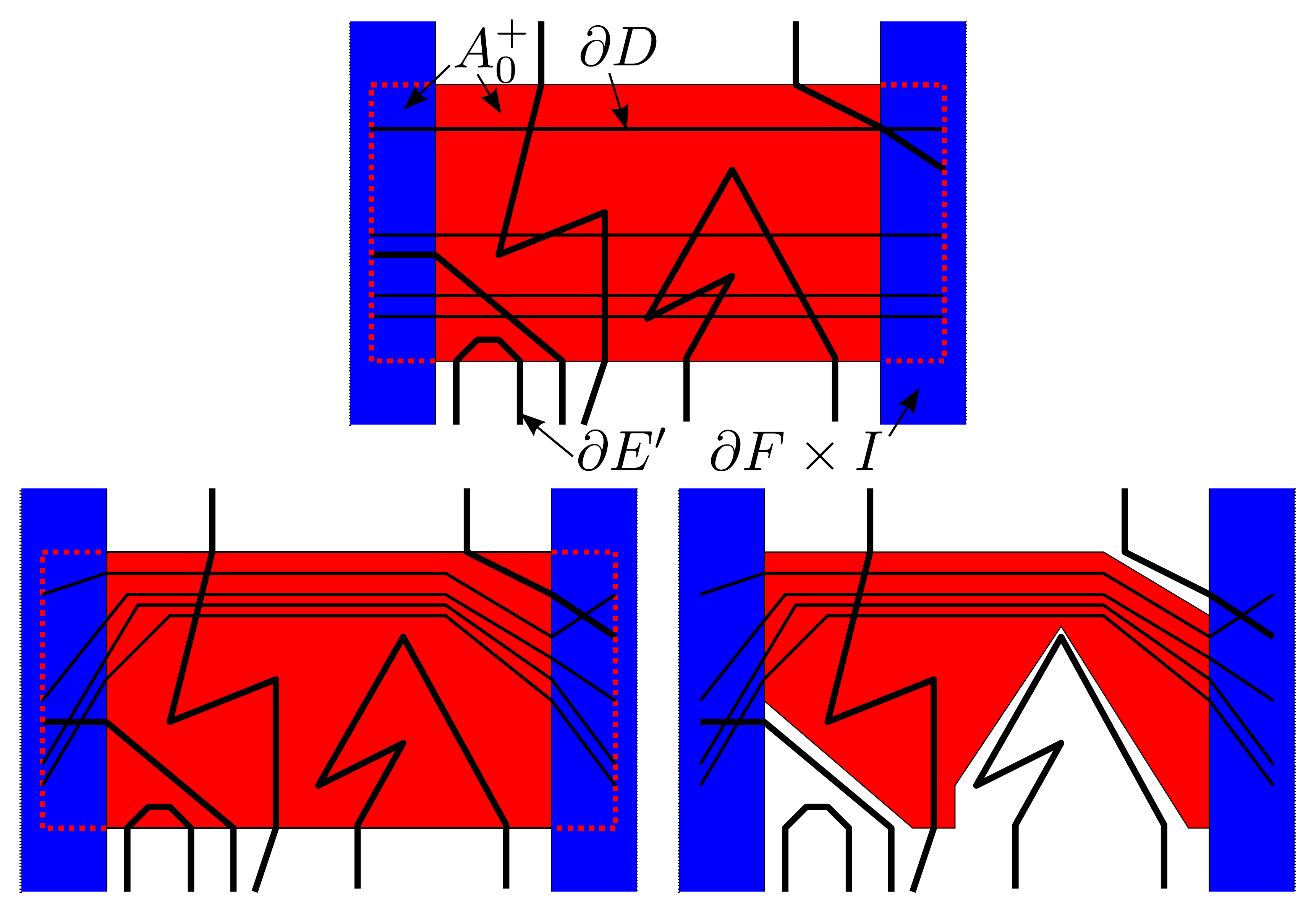}
\caption{$|\bd D \cap \bd E' \cap A_0|$ and $|A_0 \cap F' \cap \bd E'|$ can be minimized without increasing $|D \cap E'|$.}
\label{Figure:IntersectionOnA0}
\end{center}
\end{figure}
\end{proof}

\begin{Lemma}\label{Lemma:NoDisksOrTriangles}
Let $\gamma$ be an arc of $\bd E' \cap (F' \times \set{0})$.
If $\gamma$ has both endpoints on $A_0 \cap F'$ then $\gamma$ does not co-bound a disk in $F'$ with $A_0 \cap F'$.
If $\gamma$ has one endpoint on $A_0 \cap F'$ and one on $\bd F \rmv A_0$ then $\gamma$ does not cut off from $F'$ a disk whose boundary consists of $\gamma$, a single sub-arc of $A_0 \cap F'$ and a single sub-arc of $\bd F \rmv A_0$.
\end{Lemma}
\begin{proof}
Given Lemma \ref{Lemma:IntersectingA0}, this follows immediately from the minimality of $|\pi(\tau_0) \cap \pi(\tau_1)|$
(see Figure \ref{Figure:NoDisksOrTriangles}).
\begin{figure}[hbtp]
\begin{center}
\includegraphics[width=5.25cm]{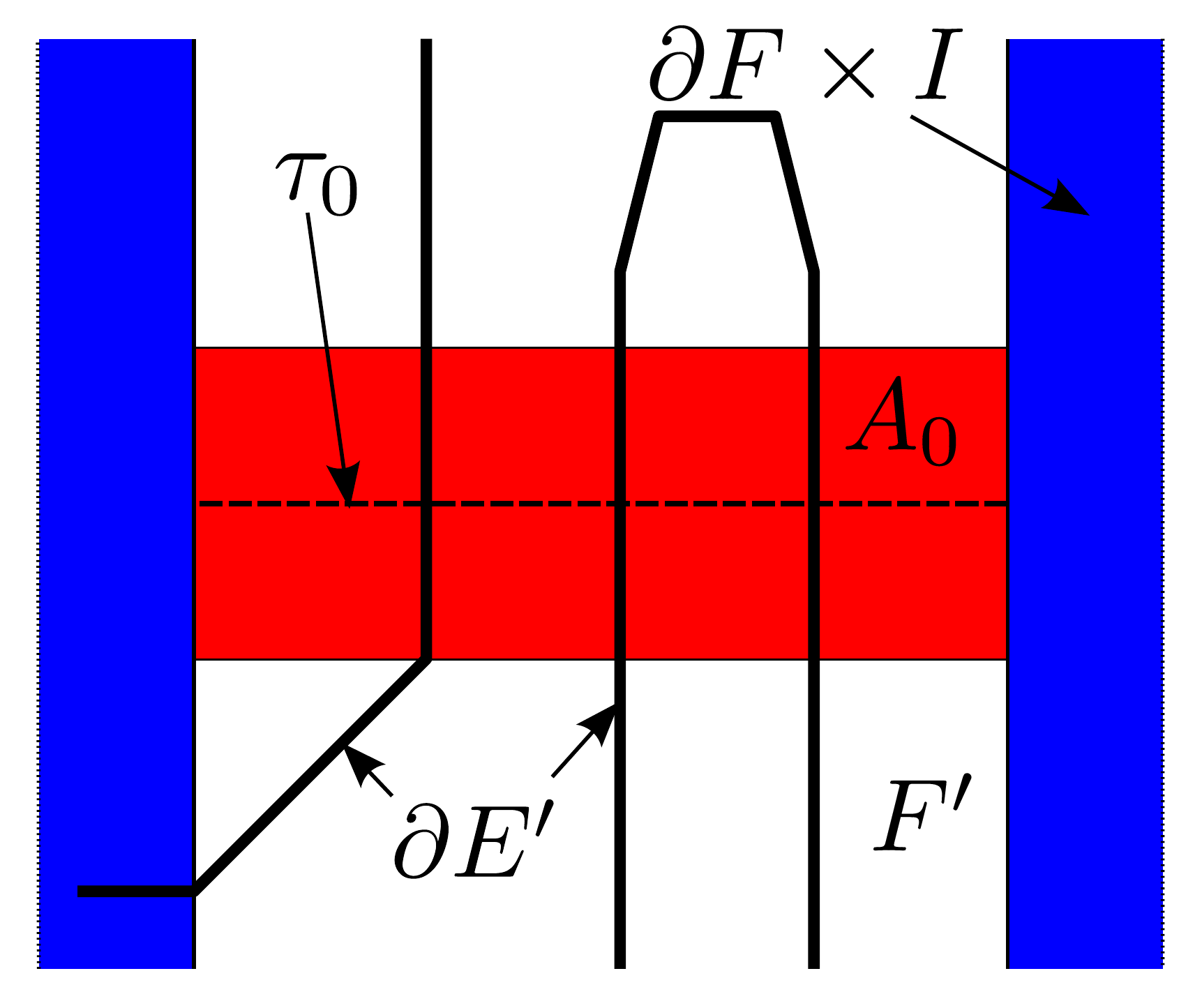}
\caption{Arcs of $\bd E' \cap F'$ do not cut off certain types of disk.}
\label{Figure:NoDisksOrTriangles}
\end{center}
\end{figure}
\end{proof}

Now consider $D \cap E'$. By innermost disk arguments, any simple closed curves of intersection could be removed, since $F \times I$ is irreducible. Thus, since $D$ has minimal size, the intersection consists of arcs. 
From Lemma \ref{Lemma:Tau1ArcsDisjoint} we know that none of these intersection arcs have endpoints on $F \times \set{1}$.
We will show that there are also no arcs of intersection with an endpoint on $F \times \set{0}$. There are three types of arcs that we will be concerned with: type 0 will be arcs with both endpoints on the same component of $\bd E' \cap (\bd F \times I)$; type I will be those with one endpoint on $F \times \set{0}$, and the other on $\bd F \times I$; type II will be arcs with both endpoints incident to $F \times \set{0}$ (see Figure \ref{Figure:VerticalDisk}).
Showing that none of these arcs exist, and hence $\bd D \cap \bd E' \cap (F \times \set{0}) = \emptyset$, will complete the proof of Proposition \ref{Proposition:TunnelCleanAfterIsotopy}.
\begin{figure}[hbtp]
\begin{center}
\includegraphics[width=7cm]{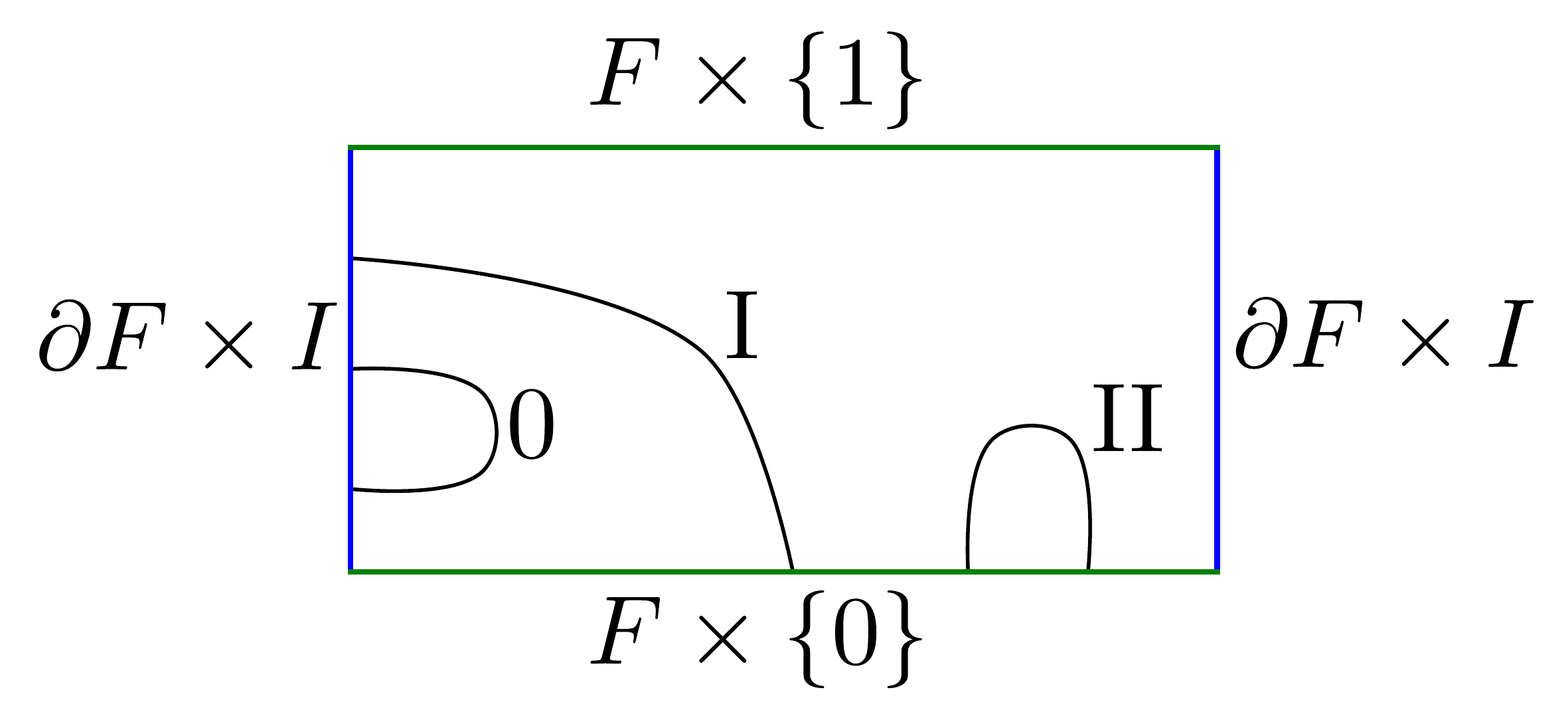}
\caption{Arcs of $D \cap E'$ of type 0, type I and type II in $E'$.}
\label{Figure:VerticalDisk}
\end{center}
\end{figure}

\subsection{Arcs of type 0}

Suppose there is an arc of $D \cap E'$ with both endpoints on the same component of $E' \cap (\bd F \times I)$.
Choose such an arc that is outermost in $E'$, and let $E$ be the subdisk of $E'$ it cuts off.
Compress $D$ along $E$, reducing $|D \cap E'|$ without altering the arcs of $\bd D \cap (F \times \set{0,1})$. This gives two disks, $D^*$ and $D^{**}$. 
Take $D^*$ to be the one containing $\alpha$ in its boundary.
At least one of $D^*$ and $D^{**}$ is essential, and neither has more than one special arc or any bad arcs in its boundary. 
In addition, $|D^* \cap (F \times \set{0,1})| \leq |D \cap (F \times \set{0,1})|$ and $|D^* \cap E'| < |D \cap E'|$, while $|D^{**} \cap (F \times \set{0,1})| < |D \cap (F \times \set{0,1})|$.
Therefore at least one of $D^*$ and $D^{**}$ is special and has smaller size than $D$, which is a contradiction.
Hence no arcs of type 0 exist.

\subsection{Arcs of type II}
If there is an arc of type II, then there is an arc of type II that is outermost in $E'$. Call this arc $\delta$, and call the subdisk of $E'$ that it cuts off $E$. Let $\gamma = \bd E \rmv \delta$. 
Boundary compressing $D$ along $E$ reduces $|D \cap E'|$ and gives two disks, $D^*$ and $D^{**}$, at least one of which is essential. Take $D^*$ to be the resulting disk containing $\alpha$ in its boundary. The endpoints of $\gamma$ must both be on $\tau_0$--arcs. 

First suppose that $\gamma \subset A_0$. Then by Lemma \ref{Lemma:IntersectingA0} we know that the endpoints of $\gamma$ lie on distinct $\tau_0$--arcs of $\bd D$. 
Let $\beta^*$ and $\beta^{**}$ be the sub-arcs of $\bd D^* \cap (F \times \set{0})$ and $\bd D^{**} \cap (F \times \set{0})$ respectively that contain copies of $\gamma$.
Then $\beta^*$ and $\beta^{**}$ are both extra arcs (see Figure \ref{Figure:gammaIa}), so neither $D^*$ nor $D^{**}$ has any bad arcs or more than one special arc in its boundary.
Moreover, it is again the case that $|D^* \cap (F \times \set{0,1})| \leq |D \cap (F \times \set{0,1})|$ and $|D^* \cap E'| < |D \cap E'|$, while $|D^{**} \cap (F \times \set{0,1})| < |D \cap (F \times \set{0,1})|$.
This tells us that at least one of $D^*$ and $D^{**}$ is special and has smaller size than $D$, a contradiction.
\begin{figure}[htbp]
\begin{center}
\includegraphics[width=10.5cm]{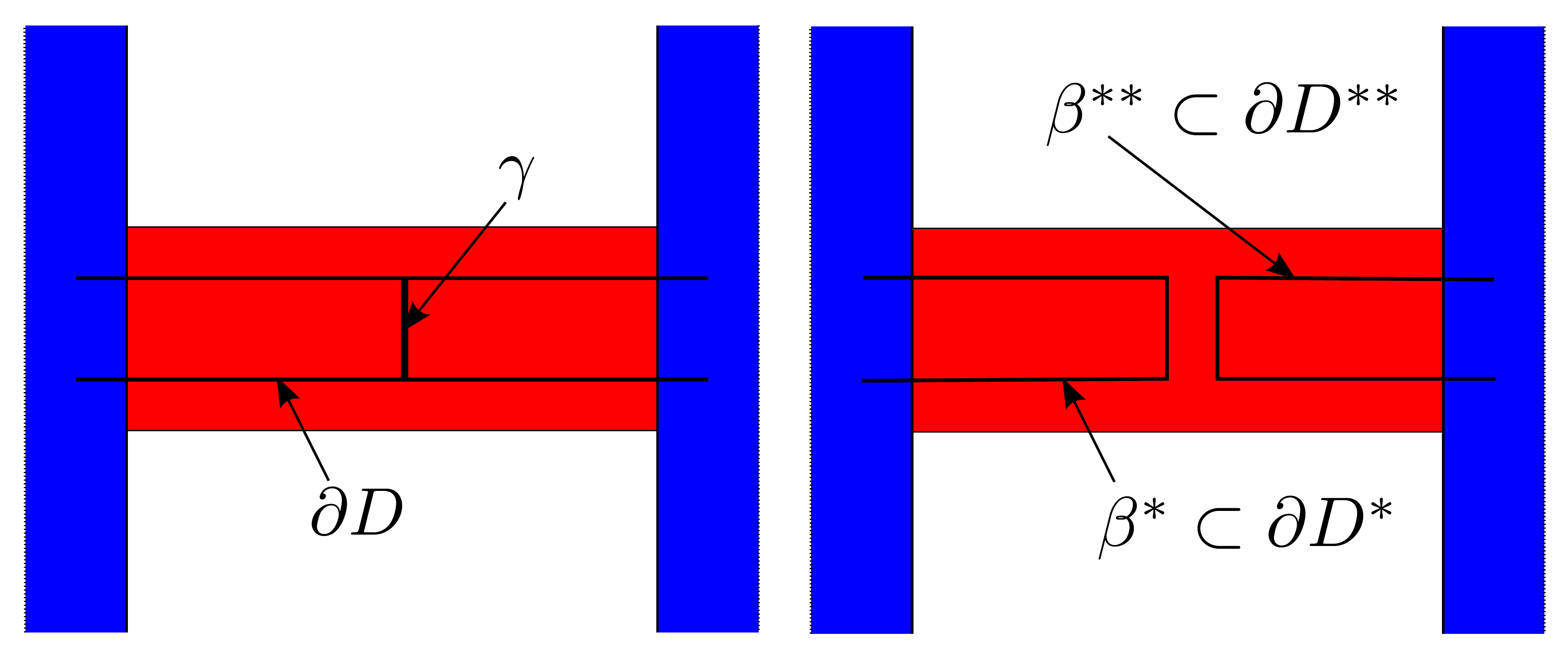}
\caption{If $\gamma \subset A_0$ then $\beta^*$ and $\beta^{**}$ are extra arcs.}
\label{Figure:gammaIa}
\end{center}
\end{figure} 

Now assume instead that $\gamma \not \subset A_0$.
Then it runs between two $\tau_0$--arcs that are outermost in $A_0$.
That is, $\gamma$ runs from a sub-arc of $\bd D$, across one of the sides of $\bd A_0$ incident to $F'$, through $F'$, then across a side of $\bd A_0$ and to another sub-arc of $\bd D$. There are, then, two things which might happen. Either $\gamma$ returns to the same side of $\bd A_0$ (see Figure \ref{Figure:gammaIIsame}), or it returns to the other side of $\bd A_0$ (see Figure \ref{Figure:gammaIIdifferent}).

If $\gamma$ returns to the same side of $\bd A_0$, then both endpoints must be incident to the same component of $\bd D \cap A_0$ (see Figure \ref{Figure:gammaIIsame}) and $\bd D^{**}$ is a simple closed curve in $F \times \set{0}$. 
Lemma \ref{Lemma:NoDisksOrTriangles} shows that $\bd D^{**}$ does not bound a disk in $F$,
so this means that $D^{**}$ is a compression disk for $F$, contradicting that $F \times \set{0}$ is incompressible in $F \times I$. 
\begin{figure}[htbp]
\begin{center}
\includegraphics[width=10.5cm]{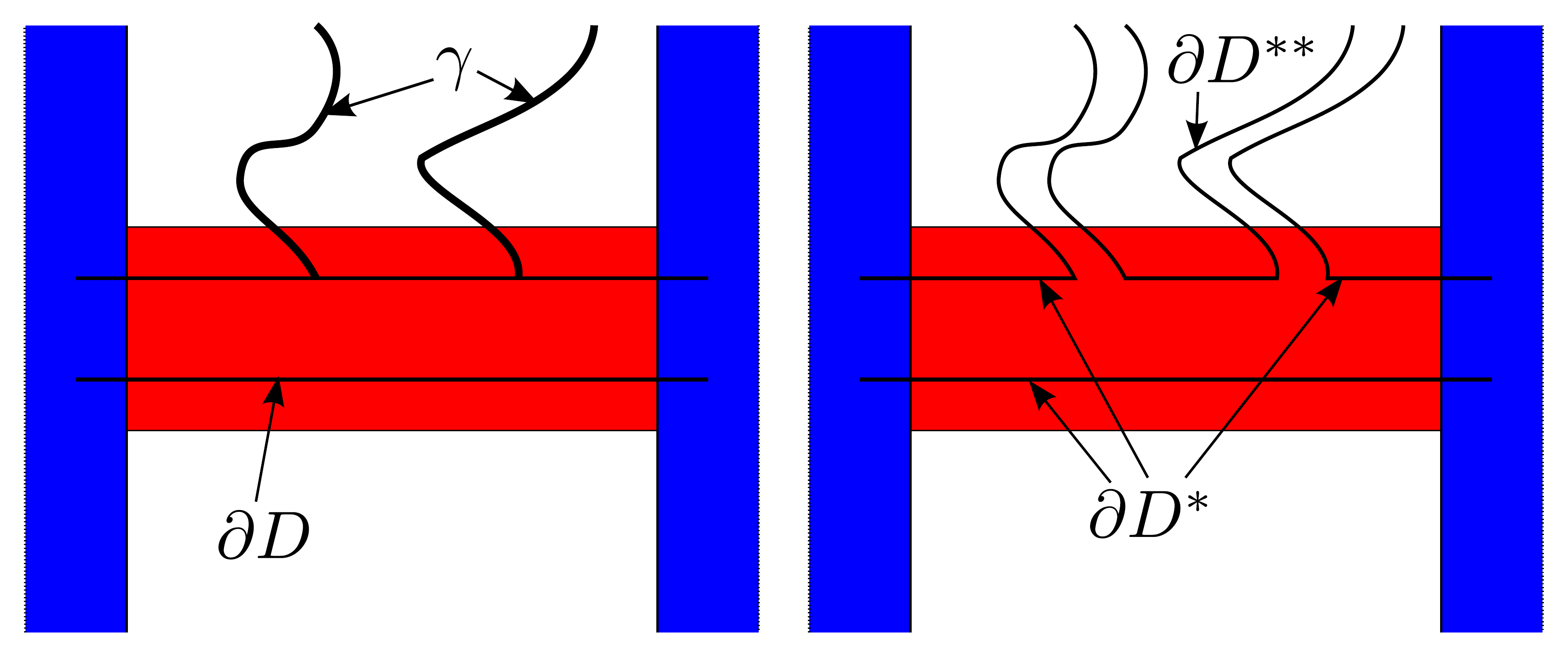}
\caption{If $\gamma \not \subset A_0$, and returns to $A_0$ on the same side, then $D^{**}$ is a compression disk for $F$.}
\label{Figure:gammaIIsame}
\end{center}
\end{figure}  

If $\gamma$ returns to the other side of $\bd A_0$, then the orientation on $D$ implies that there are at least two $\tau_0$--arcs in $\bd D$.
Let $\beta^*$ and $\beta^{**}$ be the sub-arcs of $\bd D^*$ and $\bd D^{**}$ respectively that contain copies of $\gamma$
(see Figure \ref{Figure:gammaIIdifferent}).
\begin{figure}[htbp]
\begin{center}
\includegraphics[width=10.5cm]{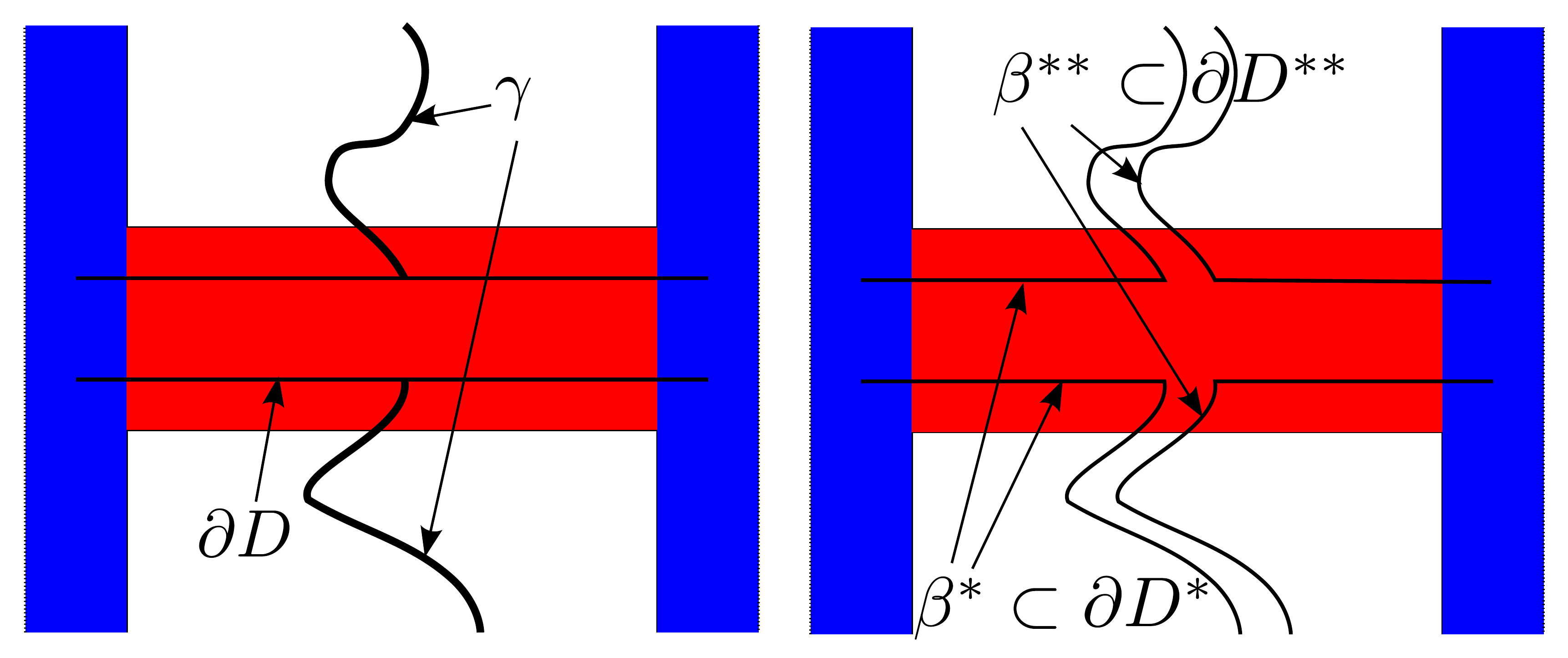}
\caption{If $\gamma \not \subset A_0$ then $|D^* \cap (F \times \set{0,1})| + |D^{**} \cap (F \times \set{0,1})| = |D \cap (F \times \set{0,1})|$.}
\label{Figure:gammaIIdifferent}
\end{center}
\end{figure}  
There are no bad arcs in either $\bd D^*$ or $\bd D^{**}$, and there is at most one special arc in $\bd D^{**}$.
As before, $|D^* \cap (F \times \set{0,1})| \leq |D \cap (F \times \set{0,1})|$ and $|D^* \cap E'| < |D \cap E'|$, while $|D^{**} \cap (F \times \set{0,1})| < |D \cap (F \times \set{0,1})|$.

If $D^{**}$ is essential then it is a special disk with smaller size than $D$, which is a contradiction. Suppose otherwise. Then $D^*$ is essential. Additionally, by Lemma \ref{Lemma:OneSpecialArc}, $\beta^{**}$ is either an extra arc, a $\tau_0$--arc or a $\tau_2$--arc.

If $\beta^{**}$ is a $\tau_0$--arc then $|\bd F|=1$, since the endpoints of $\beta^{**}$ lie on the same component of $\bd F$.
However, there is an arc parallel to $\tau_0$ in $A_0$ that is disjoint from $\beta^{**}$ and whose endpoints interleave on $\bd F$ with those of $\beta^{**}$.
It is therefore impossible that these two arcs together bound a disk in $F$. This shows that $\beta^{**}$ is not a $\tau_0$--arc.

If $\beta^{**}$ is a $\tau_2$--arc then $|\bd F|=2$ and $\beta^*$ is an extra arc. Thus $D^*$ is a special disk with smaller size than $D$, a contradiction.

If $\beta^{**}$ is an extra arc then $|\bd F|=2$ and $\beta^*$ is a $\tau_2$--arc.
Let $F^*$ be the subdisk of $F'$ that $\beta^*$ cuts off.
Now, $(\bd E' \cap (F \times \set{0})) \rmv \gamma$ consists of two arcs; call these $\gamma'$ and $\gamma''$. From their endpoints that meet $\gamma$, both $\gamma'$ and $\gamma''$ run to the opposite side of $A_0$, by Lemma \ref{Lemma:IntersectingA0}. At this point, therefore, one of $\gamma'$ and $\gamma''$ lies closer than the other in $A_0$ to the component of $A_0 \cap \bd F$ containing the endpoints of $\beta^*$.
Take this to be $\gamma'$. See Figure \ref{Figure:LinkTwistingII}.

Consider the path of $\gamma''$ from the endpoint that meets $\gamma$. When it first leaves $A_0$, $\gamma''$ enters the disk $F^*$. As we continue to follow its path, it can either end on the component of $\bd F \times \set{0}$ that contains the endpoints of $\beta^{**}$ or else return to $A_0 \cap F'$ (necessarily on the other side, by Lemma \ref{Lemma:NoDisksOrTriangles}). We see, therefore, that $\gamma''$ spirals around one boundary component of $F \times \set{0}$ some number of times before ending on this component of $\bd F$.
Consider the final section of $\gamma''$, from where it last leaves $A_0$ to where it reaches $\bd F$.
This cuts off a disk from $F'$, the remainder of whose boundary consists of a single sub-arc of $A_0 \cap F'$ and a single sub-arc of $\bd F \rmv A_0$. This contradicts Lemma \ref{Lemma:NoDisksOrTriangles}. It is therefore not possible that $\beta^{**}$ is an extra arc.
\begin{figure}[htbp]
\begin{center}
\includegraphics[width=9cm]{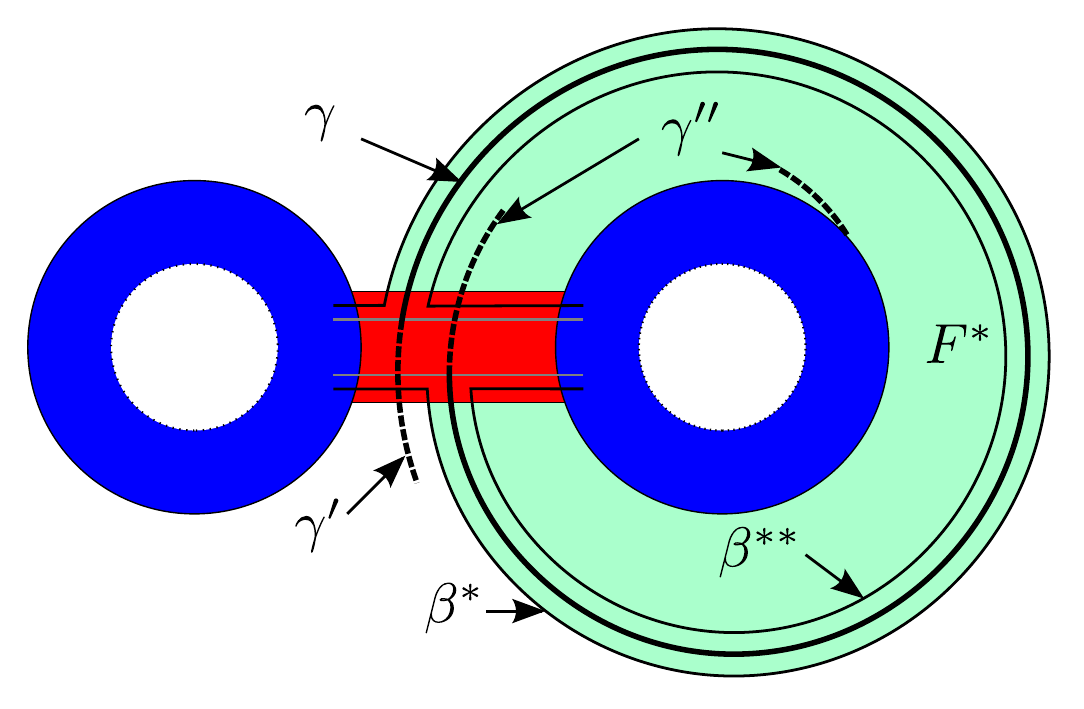}
\caption{If $\beta^{**}$ is an extra arc then $|\bd F|=2$ and $\gamma''$ spirals around one component of $\bd F$.}
\label{Figure:LinkTwistingII}
\end{center}
\end{figure} 

Thus, we conclude that there are no arcs of type II in $D \cap E'$.

\subsection{Arcs of type I}

Since we now know there are no arcs of types 0 or II, if there are arcs of type I then one of them is outermost in $E'$. Again, call one of these arcs $\delta$, and call the subdisk of $E'$ that it cuts off $E$. Let $\gamma = \bd E \rmv \delta$.
Then $\gamma$ consists of two sub-arcs. Let $\gamma_0 = \gamma \cap (F \times \set{0})$, and $\gamma_\bd = \gamma \cap (\bd F \times I)$. Observe that $\gamma_0$ has one endpoint on a $\tau_0$--arc of $\bd D$ and the other end on $\bd F \rmv A_0$, given Lemma \ref{Lemma:IntersectingA0}.
Note that, since $\gamma_0$ is disjoint on its interior from $\bd D$, Lemma \ref{Lemma:IntersectingA0} also tells us that $\gamma_0 \cap A_0$ is a single sub-arc of $\gamma_0$. 

As before, boundary compressing $D$ along $E$ results in two disks, $D^*$ and $D^{**}$, at least one of which is essential. Again let $D^*$ be the one that contains $\alpha$ in its boundary. 
Let $\beta^*$ and $\beta^{**}$ be the sub-arcs of $\bd D^*$ and $\bd D^{**}$ respectively that contain copies of $\gamma_0$.
As previously, $|D^* \cap E'| < |D \cap E'|$, neither $\bd D^*$ nor $\bd D^{**}$ contains any bad arcs, and $\bd D^{**}$ contains at most one special arc.
Now $|\bd D^* \cap (F \times \set{0,1})| + |\bd D^{**} \cap (F \times \set{0,1})| = |\bd D \cap (F \times \set{0,1})| + 1$. In addition, $|\bd D^* \cap (F \times \set{0,1})| \geq 2$ while $ |\bd D^{**} \cap (F \times \set{0,1})|\geq 1$.
Therefore $|\bd D^* \cap (F \times \set{0,1})| \leq |\bd D \cap (F \times \set{0,1})|$ and $|\bd D^{**} \cap (F \times \set{0,1})| < |\bd D \cap (F \times \set{0,1})|$.

From Lemma \ref{Lemma:NoDisksOrTriangles}, we know that neither $\beta^*$ nor $\beta^{**}$ is an extra arc.
If $D^{**}$ is essential then it is a special disk with smaller size than $D$, which is a contradiction. Suppose otherwise. Then $D^*$ is essential. Additionally, by Lemma \ref{Lemma:OneSpecialArc}, $\beta^{**}$ is either a $\tau_0$--arc or a $\tau_2$--arc.

If $\beta^{**}$ is a $\tau_2$--arc then $|\bd F|=2$ and $\beta^*$ is a $\tau_0$--arc. Thus $D^*$ is a special disk that is smaller than $D$, a contradiction.

If $\beta^{**}$ is a $\tau_0$--arc then,
as $\beta^{**}$ is disjoint from a copy of $\tau_0$ in $A_0$, together these arcs bound a disk in $F$.
If $|\bd F|=1$, the presence of this disk tells us that the endpoints of $\beta^{**}$ do not interleave on $\bd F$ with those of $\tau_0$. Therefore the disk contains $\beta^*$ and $\beta^*$ is an extra arc, a contradiction.

It remains only to consider the case that $|\bd F|=2$, when $\beta^{**}$ has its endpoints on distinct components of $\bd F$.  Again, if the disk between $\beta^{**}$ and $\tau_0$ contains $\beta^*$ then $\beta^*$ is an extra arc, a contradiction.
Accordingly, the disk does not contain $\beta^*$, and $\beta^*$ is a $\tau_2$--arc, cutting off from $F'$ a disk $F^*$. 
Let $\gamma_0' = (\bd E' \cap (F \times \set{0})) \rmv \gamma_0$.
This is an arc with one endpoint on a $\tau_0$--arc of $\bd D$, where it meets $\gamma_0$, and the other endpoint on $\bd F \rmv A_0$. Given the definition of $E'$, this endpoint lies on the opposite component of $\bd F$ to the other endpoint of $\gamma_0$. That is, $\gamma_0'$ does not meet the same component of $\bd F$ as $\beta^*$ does. See Figure \ref{Figure:LinkTwistingI}.
Consider the path of $\gamma_0'$ from where it meets $\gamma_0$.
It first runs through $A_0$, and passes through $A_0 \cap F'$ into the disk $F^*$. 
As we continue to follow its path, it can either end on $\bd F$ or else return to $A_0 \cap F'$ (necessarily on the other side, by Lemma \ref{Lemma:NoDisksOrTriangles}). We see, that, like the arc $\gamma''$ above, $\gamma_0'$ spirals around one boundary component of $F \times \set{0}$ some number of times before ending on the same component of $\bd F$.
Consider the final section of $\gamma_0'$, from where it last leaves $A_0$ to where it reaches $\bd F$.
This cuts off a disk from $F'$, the remainder of whose boundary consists of a single sub-arc of $A_0 \cap F'$ and a single sub-arc of $\bd F \rmv A_0$. This contradicts Lemma \ref{Lemma:NoDisksOrTriangles}. 
\begin{figure}[htbp]
\begin{center}
\includegraphics[width=9cm]{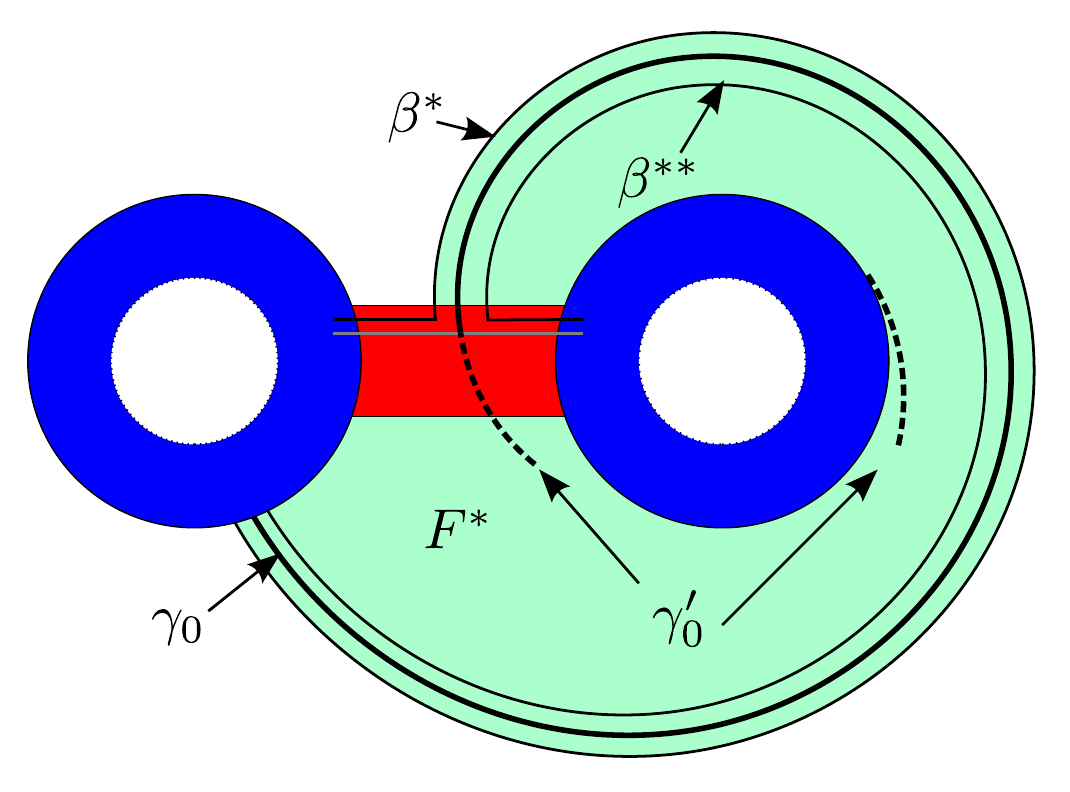}
\caption{If $\beta^{**}$ is a $\tau_0$--arc and $|\bd F|=2$ then $\gamma_0'$ spirals around one component of $\bd F$.}
\label{Figure:LinkTwistingI}
\end{center}
\end{figure} 

Thus, there are no arcs of type I. This completes the proof of Proposition \ref{Proposition:TunnelCleanAfterIsotopy}.
\end{proof}     

\TheoremTunnelAlmostClean*

\begin{proof}
By taking a small enough neighborhood $n(K \cup \tau)$ of $K \cup \tau$ in $S^3$, we may assume that $F \cap n(K \cup \tau)$ is a thrice-punctured sphere, and the Heegaard surface $S = \bd n(K \cup \tau)$ intersects $F$ in exactly two non-separating curves if $K$ is a knot, and exactly one non-separating curve if $K$ is a two-component link, 
as shown in Figure \ref{Figure:HeegaardSurfaceandFiber}.

\begin{figure}[hbtp]
\begin{center}
\includegraphics[width=11cm]{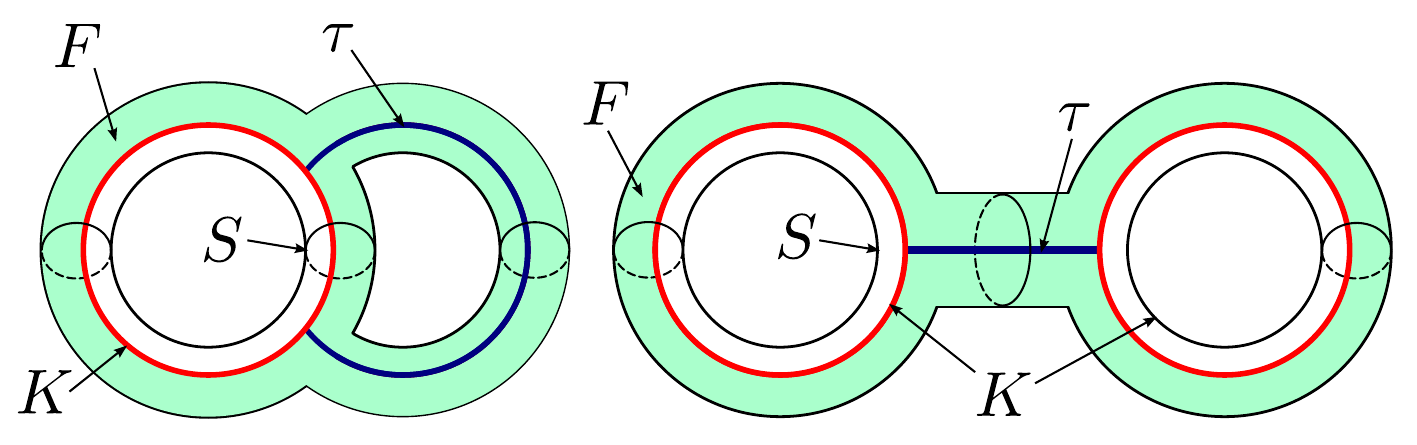}
\caption{For a small enough neighborhood, $F$ intersects $S$ in one or two curves.}
\label{Figure:HeegaardSurfaceandFiber}
\end{center}
\end{figure}       

Recall that $(S^3\rmv n(K))|F \cong F \times I$. 
Since $K$ is not the unknot, $F$ is not a disk. If $F$ is an annulus then $\tau$ is in the unique free isotopy class of essential arcs in $F$, and Theorem \ref{Theorem:TunnelAlmostClean} is immediate.
Any tunnel number one link has at most two link components, so $|\bd F|\leq 2$.
From the definition of an unknotting tunnel, the hypotheses imply that $S^3 \rmv n(K \cup \tau)$ is a genus two handlebody whereas $S^3 \rmv n(K)$ is not a handlebody. 
Thus Theorem \ref{Theorem:TunnelAlmostClean} follows from Proposition \ref{Proposition:TunnelCleanAfterIsotopy}.
\end{proof}

    
\section{Boundary Twisting and Fractional Dehn Twists}\label{section:dehntwists}

In this section, we will discuss why the free isotopy mentioned in Theorem \ref{Theorem:TunnelAlmostClean} is necessary, and why a stronger claim about tunnels being clean cannot be made in general.

First, consider a surface bundle $M = (F \times I)/h$ as in Proposition \ref{Proposition:TunnelCleanAfterIsotopy}, and suppose $M$ is tunnel number one (i.e.\ that there is an arc $\tau \subset F$ such that $M \rmv n(\tau)$ is a genus two handlebody). Let $T_\bd$ be a Dehn twist along a curve in $F$ that is parallel to a component of $\bd F$. Then for all $n \in \mathbb{Z}$, the maps $h$ and $T_\bd^n \circ h$ are freely isotopic, so that $(F \times I)/h \cong (F \times I)/(T_\bd^n \circ h)$. In fact, $\tau \subset F$ is still a tunnel for $(F \times I)/(T_\bd^n \circ h)$. However, even if $\tau$ is clean with respect to $h$, there will be intersections between $\tau$ and $(T_\bd^n \circ h)(\tau)$ in a neighborhood of $\bd F$ for all sufficiently high values of $|n|$. These intersections can be removed by freely isotoping $(T_\bd^n \circ h)(\tau)$ independently of $\tau$, but then the arc does not correspond to the image of $\tau$ under the map $(T_\bd^n \circ h)$. Recall that these twists do affect the meridian(s) of the link, and can be viewed as changing the ambient 3-manifold in which the fibered link sits.

One might hope that this type of indeterminacy would improve if we restrict our attention to knots and links in $S^3$, as this would specify the representative monodromy map by determining the meridian(s). We next, therefore, consider an example in $S^3$, suggested to the authors by Ken Baker. Suppose $\tau$ is the upper (or lower) tunnel for a fibered 2-bridge knot $K$ (see \cite{KobCUT2BK}), sitting in a fiber surface $F$ as a clean arc such that $h(\tau) \neq \tau$. Now, perform a Hopf plumbing along an arc that is parallel into $\bd F$, but has endpoints interleaved on $K$ with those of $\tau$. The result is $K \# L$, where $L$ is a Hopf link, and has a monodromy map $h'$ that is a composition of $h$ with a Dehn twist around the core curve of the Hopf band. The choice of sign for the Hopf band determines the orientation on the link, as well as the sign of the Dehn twist. Either way, $\tau$ is a tunnel of $K \# L$, since $\tau$ together with the unknotted component of the link is actually equivalent to one of the dual upper tunnels for $K$ (see \cite{KobCUT2BK}). Although one choice results in a monodromy under which $\tau$ is still clean, the other results in a monodromy under which it is not, since the extra twist forces an intersection between $\tau$ and $h'(\tau)$ in a neighborhood of the boundary of the fiber. See Figure \ref{Figure:plumbedHopfband}.
\begin{figure}[hbtp]
\begin{center}
\includegraphics[height=7cm]{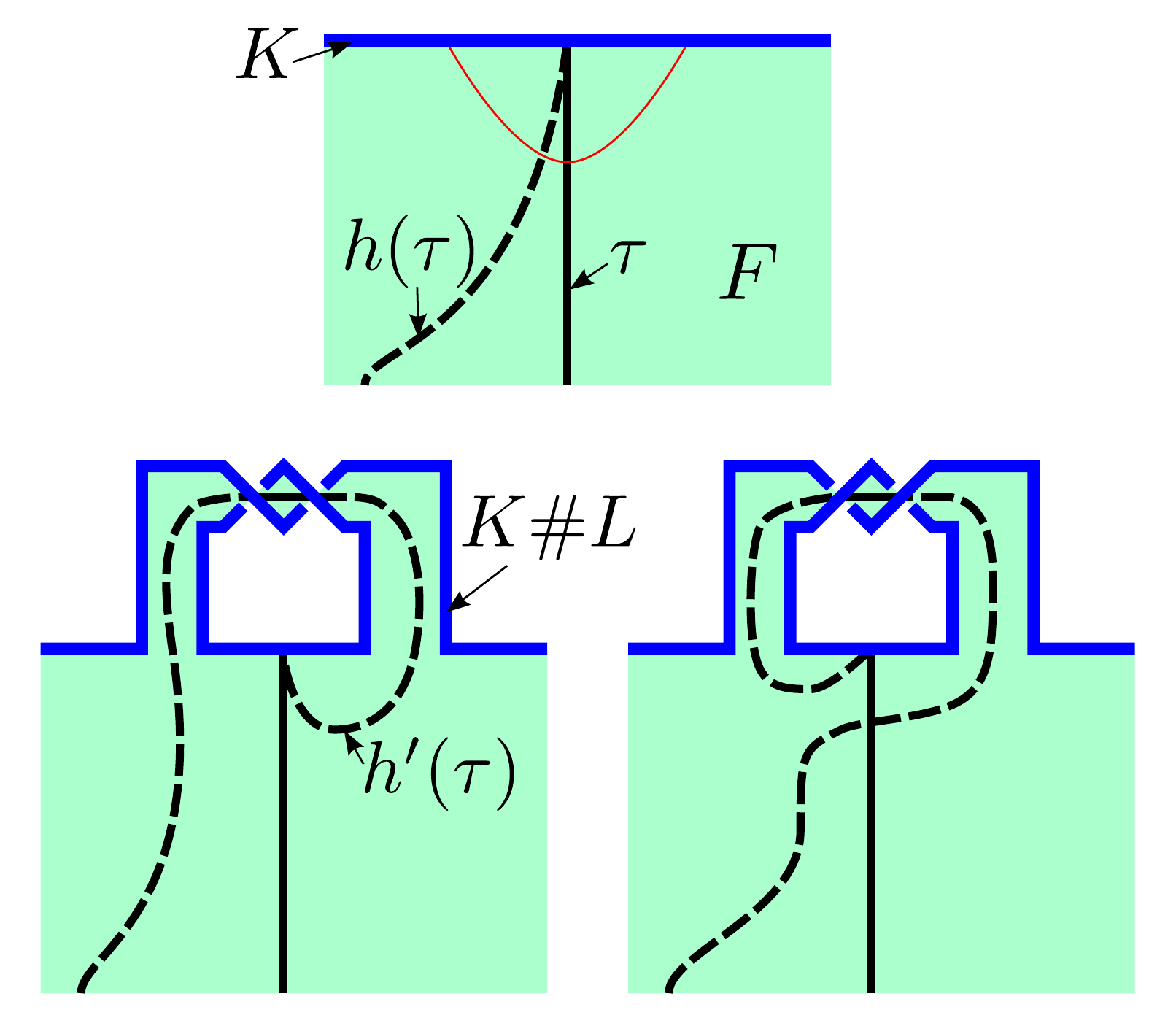}
\caption{One choice of Hopf plumbing gives a clean tunnel while the other does not.}
\label{Figure:plumbedHopfband}
\end{center}
\end{figure}

In fact, it is not only in the case of a connected sum with a Hopf link that this complication with boundary twisting arises. Kai Ishihara pointed out to the authors that if $L$ is a tunnel number one, fibered, two-component link in $S^3$ with one trivial component, $K$, and linking number $\pm 2$, $\pm 1$, or 0, then modifying the monodromy by $n$ ($n = \mp 1, n = \mp 2$, or $n$ arbitrary, respectively) Dehn twists along a curve in the fiber parallel to $K$ corresponds to performing Stallings twists, and produces tunnel number one, fibered links in $S^3$, each with tunnels that intersect their monodromy images (several times) in a neighborhood of the boundary of the fiber. 

One such example is the Whitehead link, which has linking number zero, and is additionally hyperbolic. Figure \ref{Figure:twistedWhitehead} (left) shows the link resulting from twisting $n = 3$ times around one of the components of the Whitehead link, along with a tunnel, $\tau$, for this link. One can check that the surface illustrated is a fiber (since it is genus one, i.e. minimal genus), and that $\tau$ is a tunnel for the link. In fact, $\tau$ is not clean, as the image of $\tau$ under the monodromy is indicated. Alternatively, one can see that $\tau$ cannot be clean because cutting the fiber surface along the tunnel arc produces a surface whose boundary is the $5_2$ knot. If $\tau$ were clean and alternating, then it would correspond to a plumbed Hopf band, the de-plumbing of which would result in a genus one fiber surface with a connected boundary, so the boundary would be a trefoil or figure-eight knot. On the other hand, if it were clean and non-alternating, then cutting along $\tau$ would result in a pre-fiber surface (see \cite{KobFLUO}), which itself would be a (genus one) compressible surface, implying that the boundary was the unknot. 

Twisting the same component of the Whitehead link an arbitrary $n$ times also results in a new tunnel number one, fibered link. In Figure \ref{Figure:twistedWhitehead} (right), the light gray arc still indicates a tunnel, and the black train track with weights determines the arc that is the image of this tunnel under the monodromy for this surface.

\begin{figure}[hbtp]
\begin{center}
\includegraphics[height=6.5cm]{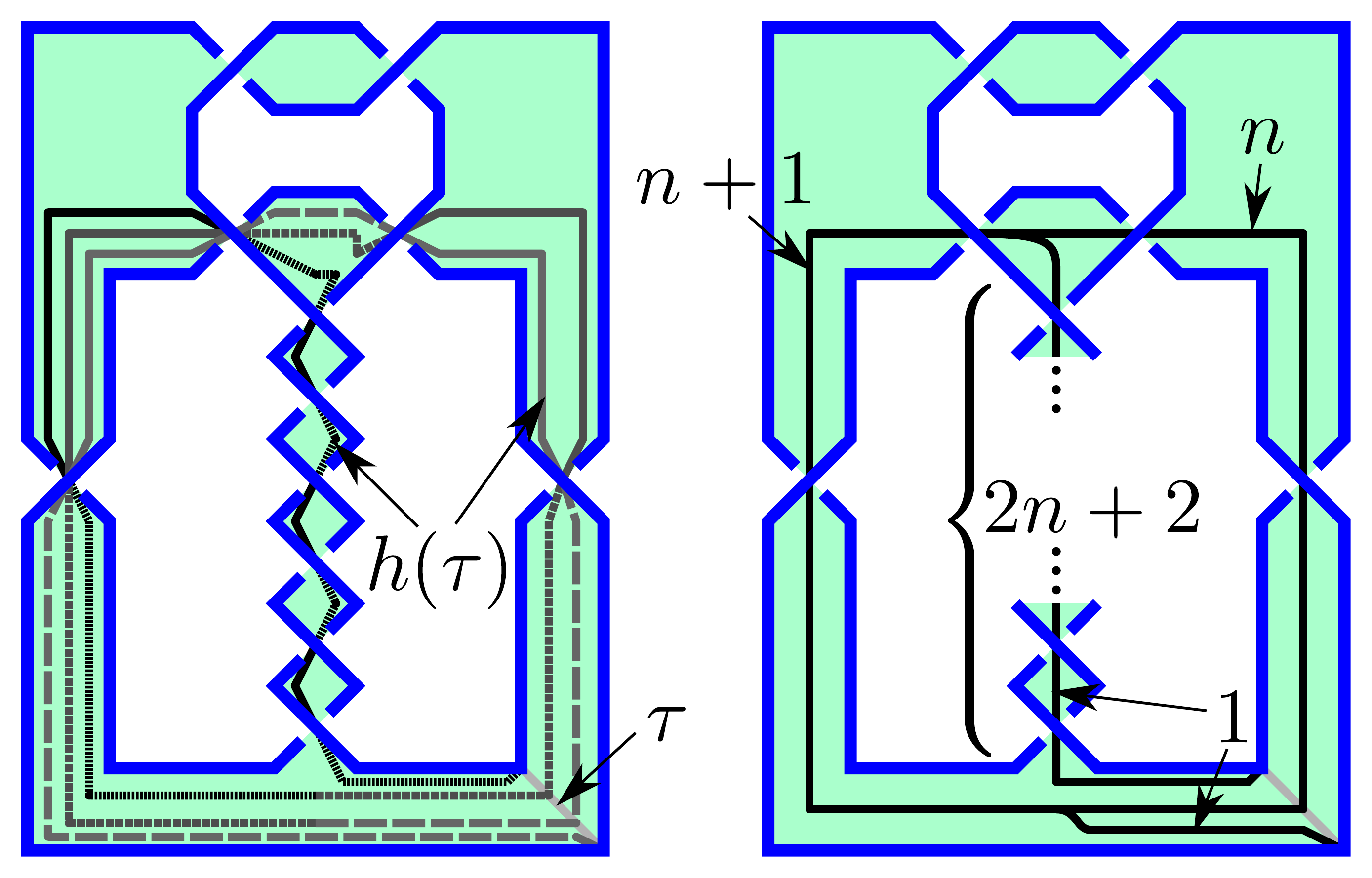}
\caption{A hyperbolic, tunnel number one, fibered link with an unclean tunnel obtained by twisting the Whitehead link around an unknotted component $n=3$ (left) or $n \geq 1$ (right) times.}
\label{Figure:twistedWhitehead}
\end{center}
\end{figure}

In light of the examples discussed above, it is reasonable to hope that if a tunnel number one, fibered link of two components has an unclean tunnel, then one of the components must be unknotted. There are, moreover, no known examples of tunnel number one, fibered \emph{knots} with unclean tunnels. It would also be reasonable to hope that for such knots, the tunnels are always clean. 

Thurston classified automorphisms of a (hyperbolic) surface (see \cite{ThuGDDS} and \cite{CasBleASANT}). Every automorphism $f \colon F \to F$ is freely isotopic to one, $\widetilde{f}$, that is either (1) \emph{reducible}: there is an essential multi-curve $\gamma$, none of whose components are parallel into $\bd F$, so that $\widetilde{f}(\gamma) = \gamma$, set-wise; (2) \emph{periodic}: $\widetilde{f}^n = \Id$ for some $n>0$; or (3) \emph{pseudo-Anosov}: there exist a pair of minimal geodesic laminations, $\Lambda^s$ and $\Lambda^u$, called the stable and unstable laminations respectively, each together with transverse measures, $\mu^s$ and $\mu^u$ respectively, and so that there exists a real number $c > 1$, with $\widetilde{f}(\Lambda^s, \mu^s) = (\Lambda^s, c \cdot \mu^s)$ and $\widetilde{f}(\Lambda^u, \mu^u) = (\Lambda^u, c^{-1} \cdot \mu^u)$. In all cases, we call $\widetilde{f}$ the \emph{Thurston representative} of $f$. (We follow the convention of referring to a map as reducible only if it is not periodic.)

By Thurston's Hyperbolization Theorem a surface bundle over $S^1$ is hyperbolic if and only if the (Thurston representative of the) monodromy map is pseudo-Anosov (see \cite{SulTTGQFVHDF}, \cite{OtaTHPVFD} or \cite{OtaHTFM}). Since the Whitehead link is hyperbolic then, the Thurston representative of its monodromy is correspondingly pseudo-Anosov. Observe that this means the family of examples given in Figure \ref{Figure:twistedWhitehead} are all hyperbolic, since all of their respective monodromies are freely isotopic to the monodromy of the Whitehead link.

We now develop some terminology about \emph{fractional Dehn twists} (for more, see \cite{KaRoFDTKTCT} and \cite{RobTFPSB2}), to explore the question of boundary twisting even further.

Let $p$ and $q$ be relatively prime integers, with $q > 0$. A $\frac{p}{q}$--\emph{fractional Dehn twist} on the annulus $A = \set{r e^{i \theta} \in \mathbb{C} \, | \, 1 \leq r \leq 2}$ is the map $T_{p/q} \colon re^{i\theta} \mapsto re^{i\left(\theta + (r - 1)\frac{2\pi p}{q}\right)}$.

Now, given a surface bundle determined by (bounded) surface $F$ and monodromy $h$, where $h(x) = x$ for all $x \in \bd F$, let $C$ be a boundary component of $F$. If $h$ is reducible, the fixed curves divide $F$ into pieces, and we may focus on the single piece containing $C$. After further cutting if needed, we find a subsurface of $F$ containing $C$ on which $\widetilde{h}$ is either periodic or pseudo-Anosov.

Let $S = F \cup_{C \times \set{1}} (C \times I)$, and extend $h$ by the identity on $C \times I$. Next, isotope $h \cup \Id$ relative to $C$, so that its restriction to $F$ equals $\widetilde{h}$. Call this map $\overline{h}$. If $\widetilde{h}$ is periodic, $\overline{h}$ can be chosen so that $\overline{h} |_{C \times [0, 1]} = T_{p/q} |_{C \times [0, 1]}$ for some $p,q$. In this case, we say that $h$ has \emph{fractional Dehn twist coefficient} $\frac{p}{q}$ with respect to the boundary component $C$.
If instead $\widetilde{h}$ is pseudo-Anosov, then it does not act as an isometry on $C$, so the fractional Dehn twist cannot be defined in exactly the same way. However, the lamination $\Lambda^s$ (and also the lamination $\Lambda^u$) specifies a finite orbit of points on $C$. In this case $\overline{h}$ can be chosen so that $\overline{h} |_{C_{\Lambda} \times [0,1]} = T_{p/q} |_{C_{\Lambda} \times [0,1]}$ for some $p,q$, where $C_{\Lambda}$ is this set of points. Again, $\frac{p}{q}$ is the \emph{fractional Dehn twist coefficient} of $h$. 

When the surface bundle is a knot complement in $S^3$, it is known that the fractional Dehn twist coefficient is either 0 or $\frac{1}{n}$ for some integer $n$, $|n| \geq 2$ (see \cite{GabPFL}, \cite{KaRoFDTKTCT}). This restricts the total number of intersections that could occur, but even $\frac{1}{n}$-twisting could prevent the tunnel arc from being clean. Consider the geodesic representative of an arc $\alpha : [0, 1] \to F$, properly embedded, where $F$ has a hyperbolic structure (and totally geodesic boundary). If the endpoints $\alpha(0)$ and $\alpha(1)$ are too close together on $\bd F$, then the fractional Dehn twist could introduce intersections between the sub-arcs $h(\alpha([0, \epsilon]))$ and $\alpha([1-\epsilon, 1])$. However, if the endpoints $\alpha(0)$ and $\alpha(1)$ are evenly spaced on $\bd F$, then a fractional Dehn twist coefficient of $\frac{1}{n}$ for large enough $|n|$ would be sufficient to ensure cleanliness of the tunnel arc. 

Even with symmetric spacing, however, it is unclear whether a fractional Dehn twist coefficient of $\frac{1}{2}$ would introduce intersections between the arc and its image. In fact, Gabai conjectured that if a fibered knot in $S^3$ has a plumbed on Hopf band, then the Dehn twist coefficient of its monodromy is not $\frac{1}{2}$. Observe that this is an open conjecture, and if true, would imply that there exist fibered knots in $S^3$ whose monodromies have fractional Dehn twist coefficients $\frac{1}{2}$, but whose fibers contain no clean arcs at all, since an arc that is clean and alternating corresponds to a plumbed on Hopf band, and an arc that is clean and non-alternating implies that the fractional Dehn twist coefficient is 0 (see \cite{KaRoFDTKTCT}).

If Gabai's conjecture is true, then our prediction that tunnel number one, fibered knots always have clean tunnels would imply that such knots must have fractional Dehn twist coefficient of either 0 or $\frac{1}{n}$ for some integer $n$, $|n| > 2$.


\section{An Application to Hyperbolic Cusps}\label{section:cusps}

In \cite{FuScCGFM}, Futer and Schleimer study the hyperbolic structure on a hyperbolic surface bundle $M$. 
Each boundary component of $M$ is a cusp in the hyperbolic structure.
If we pick one boundary component, expanding a regular neighborhood of the corresponding cusp until it `bumps into itself' gives a well-defined `maximal cusp'. The geometric properties of the bounding torus of this neighborhood are invariants of the manifold $M$.
Futer and Schleimer relate this geometry to the action of the (pseudo-Anosov) monodromy on the arc complex of the fiber surface.

Given a compact, connected surface $F$, the \emph{arc complex} $\mathcal{A}(F)$ is a simplicial complex. The vertices of the complex are isotopy classes of essential arcs properly embedded in $F$. Distinct vertices span a simplex exactly when the isotopy classes of arcs can be simultaneously realized disjointly in $F$. 
A homeomorphism $h$ of $F$ induces a homeomorphism $h_*$ of $\mathcal{A}(F)$. 
The \emph{translation distance} $\dist_{\mathcal{A}}(h)$ of $h$ is 
\[
\dist_{\mathcal{A}}(h) = \min_{v \in \mathcal{A}^{(0)}(F)} \dist(v, h_*(v)) .
\]
Here the distance $\dist$ is measured in the $1$--skeleton $\mathcal{A}^{(1)}(F)$, where each edge has length $1$.
The \emph{stable translation distance} $\bar{\dist}_{\mathcal{A}}(h)$ is given by
\[
\bar{\dist}_{\mathcal{A}}(h) = \lim_{n \to \infty} \frac{\dist(v, h_*^n(v))}{n} ,
\]
where $v$ is any vertex of $\mathcal{A}(F)$.
The triangle inequality implies that $\bar{\dist}_{\mathcal{A}}(h) \leq \dist_{\mathcal{A}}(h)$.

We claim that a pseudo-Anosov homeomorphism cannot fix an essential arc in the surface. Assume $F$ is not an annulus.
Let $\gamma$ be an essential arc in $F$. Suppose that $h'\colon F\to F$ is a map isotopic to $h$ with $h'(\gamma)=\gamma$.
First assume that $\gamma$ has its endpoints on the same component of $\bd F$.
The endpoints of $\gamma$ divide the boundary component of $F$ into two arcs. Let $\gamma_1,\gamma_2$ be the essential simple closed curves given by combining each of these two arcs with a copy of $\gamma$. Then $h'$ fixes the multi-curve $\gamma_1\cup\gamma_2$.
On the other hand, assume $\gamma$ has its endpoints on distinct components of $\bd F$. 
Let $\gamma'$ be a simple closed curve that runs parallel to $\gamma$, around one boundary component of $\bd F$ on which $\gamma$ has an endpoint, back parallel to $\gamma$ and around the other boundary component. Then, up to isotopy, $h'(\gamma')=\gamma'$.
Thus, since a pseudo-Anosov homeomorphism cannot fix an essential multi-curve, it cannot fix an essential arc.

Written in this language, Proposition \ref{Proposition:TunnelCleanAfterIsotopy} says the following.

\begin{Corollary}\label{Corollary:TranslationDistanceIsOne}
Let $F$, $h$ and $M$ be as in Proposition \ref{Proposition:TunnelCleanAfterIsotopy}. 
Then $\dist_{\mathcal{A}}(h)\leq 1$.
If $h$ is pseudo-Anosov then $\dist_{\mathcal{A}}(h)=1$.
\end{Corollary}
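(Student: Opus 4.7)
The plan is to observe that Corollary \ref{Corollary:TranslationDistanceIsOne} is essentially a translation of Proposition \ref{Proposition:TunnelCleanAfterIsotopy} into the language of the arc complex, combined with the fact (established in the paragraph immediately preceding the corollary) that a pseudo-Anosov map cannot fix an essential arc up to isotopy.

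First I would verify that the hypotheses of Proposition \ref{Proposition:TunnelCleanAfterIsotopy} are in force: $F$ is a compact orientable surface with genus at least one and at most two boundary components, $h$ restricts to the identity on $\bd F$, and $M = (F \times I)/h$ is not a handlebody but admits an arc $\tau \subset F$ whose exterior in $M$ is a genus two handlebody. The proposition then supplies an arc $\beta$, freely isotopic in $F$ to $h(\tau)$, with $\beta \cap \tau = \emptyset$. Passing to the arc complex, let $v \in \mathcal{A}^{(0)}(F)$ be the vertex represented by $\tau$. The vertex $h_*(v)$ is represented by $h(\tau)$, and hence by $\beta$. Since $\tau$ and $\beta$ are disjoint essential arcs in $F$, either they are isotopic (so $h_*(v)=v$) or they span a $1$--simplex in $\mathcal{A}(F)$. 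In either case $\dist(v, h_*(v)) \leq 1$, so by the definition of translation distance we conclude $\dist_{\mathcal{A}}(h) \leq 1$, giving the first claim.

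For the second claim, suppose now that $h$ is pseudo-Anosov. Then $\dist_{\mathcal{A}}(h) \in \{0,1\}$ by the inequality above, and it suffices to rule out $\dist_{\mathcal{A}}(h)=0$. If $\dist_{\mathcal{A}}(h)=0$, there exists a vertex $w \in \mathcal{A}^{(0)}(F)$ with $h_*(w)=w$, i.e.\ an essential properly embedded arc $\gamma$ in $F$ with $h(\gamma)$ isotopic to $\gamma$. This contradicts the fact, proved in the paragraph before the corollary, that a pseudo-Anosov map cannot fix the isotopy class of any essential arc (the fix would produce a fixed essential multi-curve, which a pseudo-Anosov map does not admit). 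Combined with $\dist_{\mathcal{A}}(h) \leq 1$, this forces $\dist_{\mathcal{A}}(h) = 1$.

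There is no real obstacle here — the work has all been done in Proposition \ref{Proposition:TunnelCleanAfterIsotopy} and in the preceding paragraph on pseudo-Anosov maps. The only point that deserves a line of care is that the free isotopy guaranteed by the proposition is exactly what is needed to identify vertices of $\mathcal{A}(F)$, since vertices are defined as free isotopy classes; this is why the conclusion of the proposition, rather than a stronger rel-$\bd$ statement, is already sufficient to bound the translation distance.
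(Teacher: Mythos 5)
Your proposal is correct and follows exactly the paper's own reasoning: Proposition \ref{Proposition:TunnelCleanAfterIsotopy} gives the disjoint representative of $h(\tau)$ (with $\tau$ essential since $M$ is not a handlebody but $M \rmv n(\tau)$ is), yielding $\dist_{\mathcal{A}}(h)\leq 1$, and the preceding paragraph's observation that a pseudo-Anosov map fixes no essential arc rules out distance $0$. Nothing further is needed.
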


Given this, \cite{FuScCGFM} Theorem 1.5 yields the following result.

\begin{Theorem}\label{Theorem:CuspAreaHeightBound}
Let $F$, $h$ and $M$ be as in Proposition \ref{Proposition:TunnelCleanAfterIsotopy}.
Suppose that $|\bd F|=1$ and $h$ is pseudo-Anosov.
Then the area of the maximal cusp is bounded above by $9 \chi(F)^2$, and the height of the cusp is strictly less than $-3 \chi(F)$.
\end{Theorem}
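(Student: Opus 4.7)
The plan is to combine Corollary \ref{Corollary:TranslationDistanceIsOne} with Theorem 1.5 of \cite{FuScCGFM}. Since $h$ is pseudo-Anosov, Corollary \ref{Corollary:TranslationDistanceIsOne} gives $\dist_{\mathcal{A}}(h) = 1$, and hence $\bar{\dist}_{\mathcal{A}}(h) \leq 1$ by the triangle inequality already noted in the preceding discussion.

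Next, I would invoke Theorem 1.5 of \cite{FuScCGFM}, which for a pseudo-Anosov surface bundle whose fiber has a single boundary component gives explicit upper bounds on the area and height of the maximal cusp in terms of $\chi(F)$ and the (stable) translation distance on the arc complex. Substituting $\dist_{\mathcal{A}}(h) \leq 1$ (equivalently $\bar{\dist}_{\mathcal{A}}(h) \leq 1$) into their bound, after checking that the quadratic-in-distance factor becomes $1$ in the area bound and the linear factor becomes $1$ in the height bound, yields the inequalities area $\leq 9\chi(F)^2$ and height $< -3\chi(F)$.

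The hypothesis $|\bd F| = 1$ plays two roles: it ensures that $M$ has a unique cusp, so that ``the maximal cusp'' refers to an unambiguous object, and it matches the hypotheses under which Futer--Schleimer state their bound. The pseudo-Anosov assumption guarantees hyperbolicity of $M$ via Thurston's Hyperbolization Theorem, so that the maximal cusp, its area, and its height are well defined; it is also the setting in which Corollary \ref{Corollary:TranslationDistanceIsOne} yields a sharp value $\dist_{\mathcal{A}}(h)=1$ rather than only an upper bound.

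The only real obstacle is bookkeeping: confirming that the constants $9$ and $3$ in the statement of Theorem \ref{Theorem:CuspAreaHeightBound} emerge correctly from the constants appearing in Futer--Schleimer Theorem 1.5 once the value of the translation distance is substituted. Beyond this verification, no further work is needed, and the theorem follows as an immediate application.
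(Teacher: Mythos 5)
Your proposal matches the paper's proof exactly: the paper derives this theorem as an immediate consequence of Corollary \ref{Corollary:TranslationDistanceIsOne} together with Theorem 1.5 of \cite{FuScCGFM}, which is precisely your argument. The only remaining task in both cases is the routine verification that the constants $9$ and $3$ fall out of the Futer--Schleimer bounds once the translation distance is set to $1$.
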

Here the \emph{height} of the cusp torus is its area divided by the length of the longitude.

We remark that \cite{FuScCGFM} Theorem 1.5 also gives lower bounds on these quantities in terms of $\bar{\dist}_{\mathcal{A}}(h)$. In \cite{GaTsMPTLCC}, Gadre and Tsai study the analogous distance in the curve complex, giving an explicit lower bound. It seems plausible that such a bound could likewise be obtained for the arc complex.

David Futer pointed out to the authors the following corollary of Corollary \ref{Corollary:TranslationDistanceIsOne}.

\begin{Corollary}\label{Corollary:AreaGrowsWithGenus}
There exists a family of fibered knots $K_n$, each having monodromy with translation distance 1,
such that the cusp area grows linearly with the knot genus.
\end{Corollary}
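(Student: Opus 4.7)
The plan is to exhibit an explicit infinite family $\{K_n\}$ of fibered, tunnel number one, hyperbolic knots with unbounded genus, and then verify that the maximal cusp area grows at least linearly in the genus. The translation distance condition will be automatic for any such family: by Thurston's Hyperbolization, a hyperbolic fibered knot has pseudo-Anosov monodromy, and Corollary \ref{Corollary:TranslationDistanceIsOne} then gives $\dist_{\mathcal{A}}(h_n)=1$ for every $n$. So the core of the proof is the existence of a suitable family and a cusp-area lower bound.

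For the family, a natural candidate is the fibered 2-bridge knots of genus $n$. Every 2-bridge knot is tunnel number one, and the fibered ones are characterized by a classical continued-fraction criterion (equivalently, a monic Alexander polynomial); for each $n$ there is such a knot of genus $n$ that is hyperbolic, so the hypotheses of Corollary \ref{Corollary:TranslationDistanceIsOne} are satisfied and the translation distance is $1$. An alternative family, more in the spirit of this paper, would be to start from a seed fibered, tunnel number one, hyperbolic knot and iteratively perform Hopf plumbings along successive tunnel arcs, using the discussion in Section \ref{section:definitions} to conclude that the tunnel number one and fibered properties are preserved while the genus grows by $\tfrac12$ at each step.

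The remaining task is the lower bound on cusp area. For 2-bridge knots, results of Futer--Kalfagianni--Purcell (and alternative approaches by Thistlethwaite--Tsvietkova) bound the maximal cusp area below by a positive constant times the length of the reduced continued fraction expansion; for the fibered subfamily this length is comparable to the genus, yielding the desired linear lower bound on cusp area. Combining this with the upper bound $9\chi(F_n)^2$ from Theorem \ref{Theorem:CuspAreaHeightBound} shows that the cusp area of $K_n$ grows linearly with the knot genus, as claimed.

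The principal obstacle is precisely this cusp-area lower bound; unlike the topological hypotheses, it does not follow from the machinery developed in this paper and must be cited from external work on the geometry of 2-bridge knots (or, in the iterated-plumbing approach, from a geometric estimate showing that each plumbing contributes a bounded-below amount of area to the maximal cusp). Everything else in the argument -- that each $K_n$ is tunnel number one, fibered, with pseudo-Anosov monodromy, and hence has $\dist_{\mathcal{A}}(h_n)=1$ -- is essentially automatic from the choice of family and Corollary \ref{Corollary:TranslationDistanceIsOne}.
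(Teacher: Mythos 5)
Your main route is essentially the paper's own argument: choose a family of fibered, hyperbolic $2$-bridge knots (hence tunnel number one) of unbounded genus, get $\dist_{\mathcal{A}}(h)=1$ from Corollary \ref{Corollary:TranslationDistanceIsOne} since the monodromy is pseudo-Anosov, and obtain the linear-in-genus cusp-area lower bound from Futer--Kalfagianni--Purcell. The paper resolves exactly the point you defer (that the twist number, equivalently the continued-fraction length, grows linearly in the genus) by exhibiting an explicit family: the rational knots with fraction $f_{6n+1}/f_{6n+2}$, built from blocks so that the alternating diagram has $6n-1$ twist regions and a minimal-genus Seifert surface with $\chi = 1-4n$, with fiberedness checked by product disk decompositions; FKP Theorem 4.8 then gives $\tfrac{1}{12}(6n-2)\leq a_n$. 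So your sketch is correct in outline, but the genus-versus-twist-number comparability for a chosen fibered subfamily is the one substantive item that must actually be verified (the paper's explicit construction is precisely that verification), not merely cited. Your alternative route via iterated Hopf plumbings along tunnel arcs should be dropped or substantially developed: plumbing preserves fiberedness and tunnel number one, but it gives no control on hyperbolicity of the resulting knots nor any lower bound on how the maximal cusp area changes, so the linear growth claim has no support along that path.
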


\begin{proof}
For $n\geq 1$, let $K_n$ be the $(6n+1)-$crossing knot with diagram $D_n$ formed from the blocks in Figure \ref{Figure:RationalKnotsSurfaces}, taking one of each of the outer two blocks and $n$ of the inner one. 
\begin{figure}[hbtp]
\begin{center}
\includegraphics[width=5in]{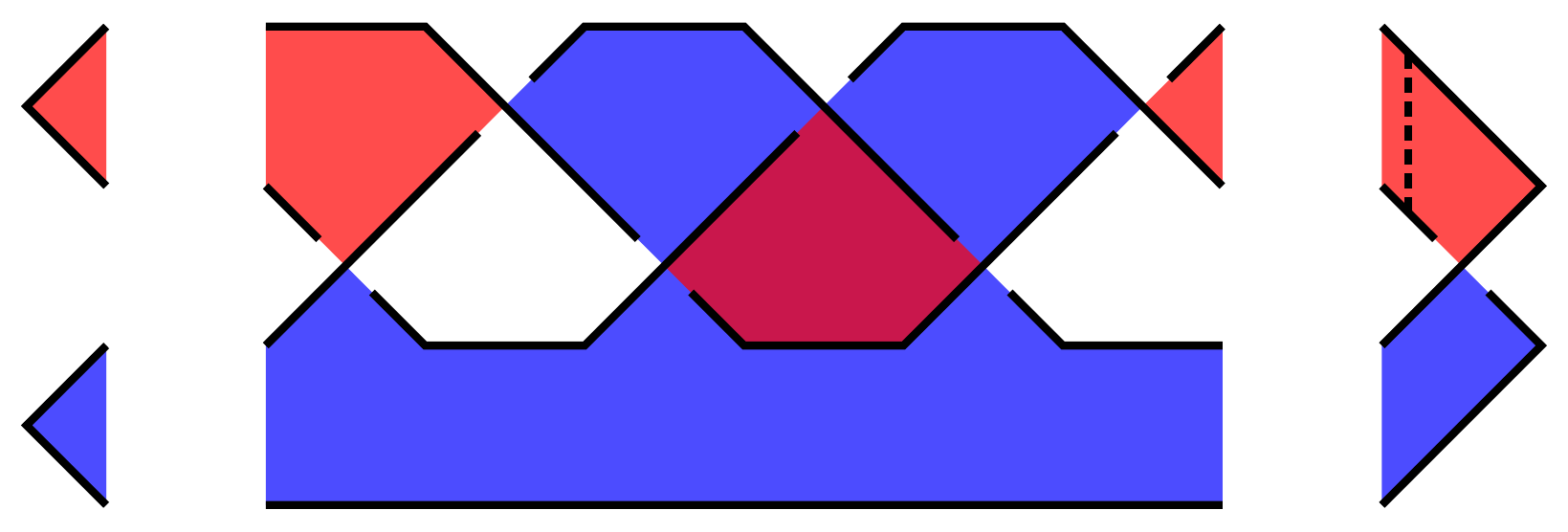}
\caption{We build the knot $K_n$ by combining $n$ copies of the middle block with one copy of each of the outer blocks.}
\label{Figure:RationalKnotsSurfaces}
\end{center}
\end{figure}  
In addition, let $R_n$ be the Seifert surface for $K_n$ constructed by combining the pieces of surface shown in Figure \ref{Figure:RationalKnotsSurfaces}. As $D_n$ is alternating, this surface has minimal genus.
Note that $\chi(R_n)=1-4n$, so $K_n$ has genus $2n$.

For $m\in\mathbb{N}$, let $f_m$ denote the $m^{th}$ term of the Fibonacci sequence (so $f_1=f_2=1$, $f_3=2$, $f_4 = 3$, $f_5=5$, etc.).
Then $K_n$ is the rational knot corresponding to the fraction $f_{6n+1}/f_{6n+2}$.
A rational knot with fraction $1/q$ for some $q$ is a torus knot, and all other rational knots are hyperbolic (see, for example, \cite{BrWuCOEDSBK}).
Two fractions $p_1/q_1$ and $p_2/q_2$ (with $p_i$ coprime to $q_i$) correspond to the same rational knot if and only if
$p_1=p_2$ and either $q_1\cong q_2 \mod p_1$ or $q_1q_2 \cong 1\mod p_1$.
Since $f_{6n+1}\neq 1$ for $n\geq 1$, this shows that $K_n$ is hyperbolic for each $n$.

That $R_n$ is a fiber surface can be checked directly by product disk decompositions (see \cite{GabDFLS3}) --- $2n$ product disk decompositions can be used to remove the `trefoil pattern' in the center of each of the $n$ middle blocks, leaving a checkerboard surface; further product decompositions can be used to reduce the surface further to a disk (by removing the white bigons in the remaining diagram).

Being rational knots, each $K_n$ has tunnel number one, with a tunnel given by the dotted arc in Figure \ref{Figure:RationalKnotsSurfaces}.
Therefore Corollary \ref{Corollary:TranslationDistanceIsOne} applies, and the monodromy of $K_n$ has translation distance $1$.

In a link diagram, a \emph{twist region} is a maximal collection of crossings connected in a line by bigons.
Each diagram $D_n$ is twist-reduced, and has $6n-1$ twist regions.
Thus \cite{FutKalPurCAFMAKT} Theorem 4.8 gives that, for the knot $K_n$, the area $a_n$ of the maximal cusp satisfies
\[\frac{1}{12}(6n-2)\leq a_n< \frac{40}{3}(6n-2). \qedhere \]
\end{proof}

Corollary \ref{Corollary:AreaGrowsWithGenus} shows that the dependence on Euler characteristic in the area bound in \cite{FuScCGFM} Theorem 1.5 and in Theorem \ref{Theorem:CuspAreaHeightBound} is necessary.


\bibliographystyle{hplain}   
\bibliography{MonodromyReferences}


\bigskip
 
\begin{footnotesize}
\noindent \textsc{CIRGET, UQAM, CP8888, Montr\'eal, H3C 3P8}\hfill

\noindent\textit{jessica.banks[at]lmh.oxon.org}\hfill
\\

\noindent \textsc{Imperial College London, South Kensington Campus, SW7 2AZ}\hfill

\noindent\textit{m.rathbun[at]imperial.ac.uk}\hfill
\end{footnotesize}

\end{document}